\documentclass[11pt,reqno]{amsart}

\usepackage[english]{babel}
\usepackage{amsthm}
\usepackage{amsmath}
\usepackage{amssymb}
\usepackage{amstext}
\usepackage{latexsym}
\usepackage{enumitem}
\usepackage{mathrsfs}
\usepackage{stmaryrd}
\usepackage{comment}
\usepackage{hyperref}
\usepackage{url}
\usepackage[left=1.9cm, right=1.9cm,marginparwidth=3cm, marginparsep=0.35cm]{geometry}

\usepackage{tikz} 
\usetikzlibrary{patterns}
\usepackage{tikz-3dplot}

\theoremstyle{plain}
\newtheorem{theorem}{Theorem}
\newtheorem{lemma}[theorem]{Lemma}

\newtheorem{corollary}[theorem]{Corollary}
\newtheorem{conjecture}{Conjecture}

\providecommand{\Bcal}{\mathcal{B}}

\providecommand{\DD}{\mathsf{D}}

\providecommand{\NNb}{\mathbb{N}}

\providecommand{\ZZ}{\mathbb{Z}}

\providecommand\llb{\llbracket}
\providecommand\rrb{\rrbracket}

\begin{document}
\title{The Davenport constant of low-dimensional boxes}

\author{Benjamin Girard}
\address{Sorbonne Université, Université Paris Cité, CNRS, IMJ-PRG, \'Equipe Combinatoire et Optimisation, F-75005 Paris, France}
\email{benjamin.girard@imj-prg.fr}

\author{Alain Plagne}
\address{Centre de math\'ematiques Laurent Schwartz, CNRS, \'Ecole polytechnique, Institut polytechnique de Paris, 91128 Palaiseau cedex, France}
\email{alain.plagne@polytechnique.edu}

\begin{abstract}
In this paper, we study the maximal length of a minimal zero-sum sequence over $\llbracket -m,m \rrbracket^d$ having a support of size $d+1$. 
When $d=2$, we find the exact value of this maximal length for all $m \ge 2$, as well as the exact structure of all extremal sequences for this problem. 
When $d=3$, we obtain lower and upper bounds that asymptotically match each other as $m$ tends to infinity.  
This allows us to improve on the best known lower bounds for the Davenport constant of low-dimensional boxes.

\end{abstract}

\subjclass[2020]{Primary: 11B75; Secondary: 11B30, 11P70} 
\keywords{Additive combinatorics, Davenport constant, inverse theorem, minimal zero-sum sequence} 

\maketitle

\section{Introduction}

Given any two integers $a \leq b$, the set of all integers $i$ satisfying $a \leq i \leq b$ will be denoted by $\llbracket a,b \rrbracket$. 
Whenever $d$, $m_1,\dots,m_d$ and $M_1,\dots,M_d$ are positive integers, the cartesian product $\prod^d_{i=1} \llbracket -m_i,M_i \rrbracket$ will often be informally referred to as a \em box \em of $\mathbb{Z}^d$.

Let $(G,+)$ be an abelian group and $X$ be a subset of $G$. 
A finite sequence $S = x_1, \dots, x_{\ell}$ of $\ell$ elements from $X$
will be called a \em sequence over \em $X$ of \em length \em $\|S\|=\ell$. The set $\text{supp}(S)=\{x_i : i \in \llbracket 1,\ell \rrbracket\}$ of all elements appearing at least once in the sequence $S$ will be called its \em support\em. 

The \em sum \em of $S$ is defined as $\sigma(S)=\sum_{i=1}^{\ell} x_i$. Whenever $\sigma(S)=0$, $S$ will be called a {\em zero-sum sequence}. Moreover, if $S$ is a zero-sum sequence such that $\sum_{i \in I} x_i \ne 0$ for every $\emptyset \neq I \subsetneq \llbracket 1, \ell \rrbracket$, it will be called a \em minimal \em zero-sum sequence. 

The study of minimal zero-sum sequences, known as {\em zero-sum theory}, is a well-established research topic in additive combinatorics with a wide range of applications, most notably to non-unique factorization theory \cite{GB,Ge-HK06a,Grynkie22}. 

A fundamental problem in this area asks for the \em Davenport constant \em of $X$, which is denoted by $\mathsf{D}(X)$ and defined as the maximal length of a minimal zero-sum sequence over $X$.
This question has been extensively studied since the late sixties, but remains widely open to this day, even in the special case where $X=G$ is a finite abelian group \cite{GG06,G18}, or where $X$ is a box of $G=\mathbb{Z}^d$ \cite{DGS,PT}.
In the latter case, here is a brief account of what is currently known.

When $d=1$, it is an easy exercise to prove that $\DD (\llbracket -1,1 \rrbracket )=2$ and $\DD (\llbracket -m,m \rrbracket )=2m-1$ for every integer $m \ge 2$, but it wasn't until recently that the authors of the present paper proved \cite{GP25} that, for any two positive integers $m,M$, one has
$$
\DD (\llbracket -m,M \rrbracket )= m+M - \rho (m,M)
$$
where 
$$
\rho (m,M) = \min \{ t \in \NNb : \text{ there is a } t' \in \NNb \text{ such that } 0 \leq t' \leq t \text{ and } \gcd \big( M-t', m- (t-t') \big)=1\}.
$$
Moreover, the exact structure of all minimal zero-sum sequences over $\llbracket -m,M \rrbracket$ having maximal length is known when $\rho(m,M)=0$ \cite{PT} or $\rho(m,M)=1$ \cite{GP26}.

When $d =2$, the Davenport constant of $\llbracket -1,1 \rrbracket \times \llbracket -m,M \rrbracket$  and of $\llbracket -1,m \rrbracket \times \llbracket -1,M \rrbracket$, as well as the exact structure of all minimal zero-sum sequences of maximal length, have been determined in \cite{China} and \cite{DZ21} respectively. 
It is also known \cite{China,GP26,PT} that $ \DD(\llb -1,1\rrb^2)=4$ and that, for every $m \ge 2$, one has
\begin{equation}
\label{DDbox}
4m^2-m+1 \le \DD(\llb -m,m\rrb^2) \le \frac{5 \pi}{2} m^2 + O(m).
\end{equation}
Finally, when $d\ge 3$, the best currently known bounds for $m \ge 2$ \cite{PT} are
\begin{equation}
\label{DDbox3et plus}
(2m-1)^d \le \DD(\llb -m,m\rrb^d) \le \left( 2 \left(d+\frac1d -1 \right) m+1 \right)^d.
\end{equation}

\section{New results}
\label{newresults}

Our aim in this paper is to improve on the lower bounds in (\ref{DDbox}) and (\ref{DDbox3et plus}) when $d \in \{2,3\}$. To do so, we study the quantity $\DD^{(d+1)} (\llbracket -m,m \rrbracket ^d)$ which is defined as the maximal length of a minimal zero-sum sequence over $\llbracket -m,m \rrbracket ^d$ having a support of size $d+1$.

These numbers $\DD^{(k)}(X)$, where $X$ is any subset of an abelian group $G$, and $k$ any positive integer, were introduced in \cite{GP26}, and studied therein in the special case where $k=d+1$ and $X$ is a discrete Euclidean ball of $\mathbb{Z}^d$, that is to say a subset of the form
$$
X= \Bcal^{(d)}_m =\{ (u_1,\dots, u_d) \in \ZZ^d : u_1^2+ \cdots +u_d^2 \leq m^2 \},
$$
where $m$ is a positive integer. 
Generally speaking, it is believed that whenever $X$ is a ``sufficiently regular" subset of $\mathbb{Z}^d$ (for example, a discrete ball for some norm over $\mathbb{R}^d$ with a radius being large enough in terms of $d$), the quantity $\DD^{(d+1)}(X)$ ought to be a good approximation, if not the exact value, of $\DD(X)$. For more insight, see the discussion above Theorem $2$ in \cite{GP26}.

Our first theorem bears upon two-dimensional boxes and will be proved in Section \ref{sectThm4}. It gives the exact value of $\DD^{(3)} (\llbracket -m,m \rrbracket ^2)$ (thus improving on \eqref{DDbox}, including the version in \cite{China}) for all integers $m \ge 2$, and also provides the exact structure of all minimal zero-sum sequences over $\llbracket -m,m \rrbracket ^2$ having a support of size $3$ and maximal length with respect to this property. 

Before stating our result, let us denote by $q(m)$ the smallest prime number not dividing $m$, and by $S(m)$ the sequence consisting of: 

$q(m)(m -1) + m$ copies of $
\begin{pmatrix}
m \\
m 
\end{pmatrix},
$
$2m^2 - mq(m)$ copies of 
$
\begin{pmatrix}
-m \\
m-1
\end{pmatrix},
$ 
and
$2m^2-m$ copies of 
$
 \begin{pmatrix}
 m-q(m) \\
-m
\end{pmatrix}
$.

\begin{theorem}
\label{th1}
For every integer $m \geq 2$, one has
$$
\DD^{(3)} (\llbracket -m,m \rrbracket ^2) = 4m^2 -q(m).
$$
Moreover, up to symmetries and reordering of terms, $S(m)$ is the only minimal zero-sum sequence 
over $\llbracket -m,m \rrbracket ^2$ having a support of size $3$ and maximal length with respect to this property.
\end{theorem}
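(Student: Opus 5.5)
Any zero-sum sequence over $\ZZ^2$ with support $\{a,b,c\}$ of size $3$ is minimal only when the support is in general position and the multiplicities are determined by determinants. So I will first reduce the statement to a lattice problem on $2\times 2$ determinants. The three vectors $a,b,c$ must positively span a line or the plane. They cannot lie on a line: a minimal zero-sum sequence with collinear support of size $3$ has length at most $\DD(\llb -m,m\rrb)=2m-1$, which is far below the target. So $0$ lies in the interior of the triangle $abc$, and the relation space of $(a,b,c)$ is one-dimensional. Write $\alpha=|\det(b,c)|$, $\beta=|\det(c,a)|$, $\gamma=|\det(a,b)|$, and $g=\gcd(\alpha,\beta,\gamma)$. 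Every zero-sum sequence over $\{a,b,c\}$ is then $k(\alpha/g,\beta/g,\gamma/g)$, so the minimal one is the primitive relation. Its length is
$$L=\frac{\alpha+\beta+\gamma}{g}=\frac{2\,\mathrm{Area}(abc)}{g}.$$
The theorem thus becomes: maximize $(\alpha+\beta+\gamma)/g$ over triangles with vertices in $\llb -m,m\rrb^2$ containing the origin in their interior, and identify the maximizers.

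For the lower bound I plug in $S(m)$. Take $a=(m,m)$, $b=(-m,m-1)$, $c=(m-q,-m)$ with $q=q(m)$. Then
\[
\det(b,c)=m^2-(m-1)(m-q)=mq+m-q+\dots
\]
and I will verify directly that the three absolute determinants equal the stated multiplicities, namely $q(m-1)+m$, $2m^2-mq$ and $2m^2-m$. Their sum is $4m^2-q$, and their gcd is $1$, which follows because $q\nmid m$. This is a routine check.

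For the upper bound, twice the area of a triangle in the box is at most $4m^2$, that is, half of twice the box area $(2m)^2\cdot 2$. Equality forces the triangle to use a full side of the square and the opposite corner. The deficiency $\delta=4m^2-(\alpha+\beta+\gamma)$ is then controlled by how far the vertices move from these extremal positions, and there are two regimes.
\begin{itemize}
\item If $g\ge 2$, then $L\le 2m^2$, which is at most $4m^2-q$ for $m\ge 2$ (only small cases need checking).
\item If $g=1$, we need $\delta\ge q$, with equality only for $S(m)$ up to symmetry.
\end{itemize}

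The main work is the $g=1$ case, and I plan to parametrize near-extremal triangles. By the symmetries of the square (the dihedral group, together with the fact that $L$ is invariant under these), I assume $a=(m,m)$ is a corner. If $a$ is not a corner, I expect a loss of at least about $m$ in area, which exceeds $q$ since $q=O(\log m)$. For $m$ small relative to this loss the comparison must be checked separately.

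With $a$ a corner, the other two vertices lie near the opposite sides. Write $b=(-m,m-s)$ and $c=(m-t,-m)$, plus the possible perturbations off the boundary. An explicit computation gives $\delta$ as a small bilinear expression in $s,t$; for instance, $s=1,t=q$ gives $\delta=q$. The gcd condition then reduces to a congruence such as $\gcd(\text{the three determinants})=1$, which for small $s,t$ amounts to $t\nmid m$-type conditions. For $s=0$, the determinants become multiples of $m$ (up to $t$), so $\gcd(t,m)$ divides everything; this forces $t$ to be coprime to $m$ and leads to the analysis of primes not dividing $m$.

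The hardest step is the exhaustive case analysis showing that every configuration with $\delta<q$ has $g>1$. This must use the minimality of $q$: every integer $t<q$ has all its prime factors dividing $m$. It must also pin down the equality configurations, and I would organize the cases by the number of vertices lying on the boundary of the box.
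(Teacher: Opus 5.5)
You have the reformulation right: the length is $(\alpha+\beta+\gamma)/g$, the bound $\alpha+\beta+\gamma\le 4m^2$ disposes of $g\ge 2$, and your target configuration $a=(m,m)$, $b=(-m,m-s)$, $c=(m-t,-m)$ with deficiency $\delta=st$ is exactly where the paper's argument lands. The gap is the step in between, which carries both the upper bound and the uniqueness claim. You must prove that every admissible triangle not of this form (up to symmetry) satisfies $\alpha+\beta+\gamma\le 4m^2-m$, or at least $<4m^2-q(m)$.

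Your proposal does not do this. You only say you \emph{expect} a loss of about $m$ when no vertex is a corner, and you leave the ``perturbations off the boundary'' of $b$ and $c$ untreated. Organizing cases by the number of boundary vertices gives no quantitative bound by itself: a boundary vertex may sit on the wrong side, and moving one vertex changes all three determinants, hence $g$, at once.

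The paper's device runs as follows. Place the largest determinant in position $3$. Since it is at least $(4m^2-q)/3>m^2$, the two products in $a_{1,1}a_{2,2}-a_{2,1}a_{1,2}$ have opposite signs, which forces, up to analogy, the sign pattern $\begin{pmatrix}+&-&+\\+&+&-\end{pmatrix}$. Then
$\sum_i|\det A_i|=(a_{2,2}-a_{2,3})a_{1,1}+(a_{1,3}-a_{1,2})a_{2,1}+|a_{2,3}a_{1,2}-a_{1,3}a_{2,2}|$, which is affine in each entry once the sign of the last determinant is fixed. So one may drop the gcd condition and push entries to extreme values one at a time. This gives $\sum_i|\det A_i|\le 4m^2-m$ successively when $a_{1,1}\le m-1$, when $a_{2,1}\le m-1$, and when $a_{1,2}+a_{2,3}\ge -(2m-1)$.

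Combined with $q(m)<m$ for $m\ge 3$, this is what justifies your parametrization. You also only assert that inequality, via $q=O(\log m)$. Without this step or an equivalent bound, you prove neither the upper bound nor uniqueness. The case $m=2$, where $q(2)=3>m$, has to be done separately; the paper uses exhaustive search.

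Two smaller points also need correcting.

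\emph{Final arithmetic step.} With your parametrization the determinants are $m(2m-s)$, $m(2m-t)$ and $m(s+t)-st$. For $0\le s,t\le m$ (as the sign pattern gives), $g=1$ forces $\gcd(s,m)=\gcd(t,m)=1$ and $s\ne t$; indeed, if $s=t$ then $2m-s\ge m>1$ divides all three. Hence $st\ge q(m)$, with equality iff $\{s,t\}=\{1,q(m)\}$, because the least integer $\ge 2$ coprime to $m$ is the prime $q(m)$. So the relevant condition is coprimality, not ``$t\nmid m$''. Also, $s=0$ does not force anything about $t$: it is excluded outright, because $m$ then divides all three determinants.

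\emph{Lower bound.} The condition $q\nmid m$ alone does not give $g=1$. With $m=5$ and the prime $7\nmid 5$ in place of $q(5)$, the vectors $(5,5)$, $(-5,4)$, $(-2,-5)$ have determinants $33,15,45$, whose gcd is $3$. You need, as the paper does, that every prime below $q(m)$ divides $m$. A common prime factor $p$ cannot divide $m$, since it would then equal $q(m)$. So $p$ divides $2m-1$ and $(q(m)-1)(m-1)$, hence $q(m)-1$ (because $p\mid m-1$ would give $p\mid m$). Thus $p<q(m)$ and $p\mid m$, a contradiction.
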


This theorem readily allows us to improve on the lower bound given by \eqref{DDbox} when $d=2$. 

\begin{corollary}
\label{improvement44}
For every integer $m \geq 2$, one has 
$$
\DD (\llbracket -m,m \rrbracket ^2) \geq 4m^2 -q(m).
$$
In particular, 
$$
\DD (\llbracket -m,m \rrbracket ^2) \geq 4m^2 - O( \log m).
$$
\end{corollary}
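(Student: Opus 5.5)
The corollary follows from Theorem \ref{th1} together with an elementary estimate on $q(m)$. By definition, $\DD(X)$ is the maximal length of a minimal zero-sum sequence over $X$ with no restriction on the support, so $\DD^{(3)}(\llbracket -m,m\rrbracket^2) \le \DD(\llbracket -m,m\rrbracket^2)$. Theorem \ref{th1} then gives the first inequality immediately. Concretely, $S(m)$ is a minimal zero-sum sequence over $\llbracket -m,m\rrbracket^2$ of length
$$(q(m)(m-1)+m)+(2m^2-mq(m))+(2m^2-m)=4m^2-q(m).$$

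For the asymptotic statement it suffices to show $q(m)=O(\log m)$. The plan is as follows.

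\begin{enumerate}
\item Let $q=q(m)$. Every prime $p<q$ divides $m$, so $\prod_{p<q} p$ divides $m$. This gives
$$\sum_{p<q}\log p \le \log m.$$
\item By Chebyshev's lower bound $\theta(x)=\sum_{p\le x}\log p \gg x$ for $x\ge 2$, the left-hand side is $\gg q$ once $q\ge 3$. Hence $q\le C\log m$ for some absolute constant $C$.
\item The case $q=2$ is trivially $O(\log m)$ since $m\ge 2$.
\end{enumerate}

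If one prefers to avoid Chebyshev's bound, the elementary estimate $\prod_{p\le x}p \ge 2^{\pi(x)}$ combined with Bertrand-type arguments also works. The bound via $\theta$ is the cleanest, and in fact the prime number theorem gives $q(m)\le(1+o(1))\log m$.

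There is no real obstacle here: all the difficulty lies in Theorem \ref{th1}, which we take as given. The only point to check is that the sequence provided by the theorem has nonnegative multiplicities, so that it is a genuine sequence over the box. This requires $2m^2-mq(m)\ge 0$, i.e. $q(m)\le 2m$, which holds by Bertrand's postulate: there is a prime in $(m,2m]$, and such a prime does not divide $m$.
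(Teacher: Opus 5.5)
Your proposal is correct and follows essentially the paper's route. It combines $\DD \ge \DD^{(3)}$ with the minimal zero-sum sequence $S(m)$ of length $4m^2-q(m)$ from Theorem~\ref{th1}, and then uses $\prod_{p<q(m)}p \mid m$ together with a linear lower bound on $\vartheta$. The paper uses Dusart's explicit estimate to get $q(m)\le 1+4\log m$, while your non-explicit Chebyshev bound is enough for $O(\log m)$.

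One caveat concerns your aside: the estimate $\prod_{p\le x}p\ge 2^{\pi(x)}$ loses too much. Since $\pi(x)\asymp x/\log x$, it only yields $q(m)\ll \log m\,\log\log m$, so keep the Chebyshev argument.
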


The second assertion will be deduced from the first via a result on the asymptotic behaviour of the arithmetic function $q$ (see  Lemma \ref{qq} in Section \ref{sectThm4}).

More importantly, we conjecture that the first inequality in Corollary \ref{improvement44} is in fact an equality, and that all minimal zero-sum sequences of maximal length over $\llbracket -m,m \rrbracket ^2$ have a support of size $3$.
Recall that $S(m)$ is the sequence described just before the statement of Theorem \ref{th1}.

\begin{conjecture}
\label{conj8999}
For every integer $m \geq 2$, one has $$\DD (\llbracket -m,m \rrbracket ^2) =4m^2-q(m).$$ 
Moreover, up to symmetries and reordering of terms, $S(m)$ is the only minimal zero-sum sequence of maximal length over $\llbracket -m,m \rrbracket ^2$.
\end{conjecture}

In the three-dimensional case, we checked first that 
$\DD^{(4)} ( \llbracket -1,1 \rrbracket ^3)= 10$ (an exhaustive search can be done easily in this case). For all larger values of $m$, we obtain the following theorem.

\begin{theorem}
\label{thmbox3}
For every integer $m \geq 2$, one has
$$
\DD^{(4)} ( \llbracket -m,m \rrbracket ^3) \geq 
\left\{
\begin{array}{ll}
16m^3 - 16m^2 +10m-2 	& {\mathrm{ if }}\ m\  \mathrm{ is\ odd,} \\
16m^3 - 16m^2 +8m-1 	& \mathrm{ if }\ m\  \mathrm{ is\ even.}
\end{array}
\right.
$$
\end{theorem}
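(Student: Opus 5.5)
The plan is to reduce the problem to a determinant computation and then give an explicit construction, as was done for $S(m)$ in the two-dimensional case. The reduction works as follows. Let $v_0,v_1,v_2,v_3\in\llbracket -m,m\rrbracket^3$ be such that any three of them are linearly independent, and suppose there are positive integers $a_0,\dots,a_3$ with $\gcd(a_0,a_1,a_2,a_3)=1$ and $\sum a_i v_i=0$. Then the integer relations among the $v_i$ form a lattice of rank $1$, generated by the primitive vector $(a_0,\dots,a_3)$. A zero-sum subsequence of the sequence $v_0^{a_0}v_1^{a_1}v_2^{a_2}v_3^{a_3}$ corresponds to a vector $(b_i)$ with $0\le b_i\le a_i$ lying in this lattice, so it is either empty or the whole sequence. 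Hence this sequence is a minimal zero-sum sequence of length $\sum a_i$ whose support has size $4$.

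By Cramer's rule, the relation is $\sum_i D_i v_i=0$ with $D_i=(-1)^i\det(v_j : j\neq i)$, and $(a_i)=(D_i)/g$ where $g=\gcd(D_0,\dots,D_3)$. When $0$ lies in the interior of the convex hull of the $v_i$, all $D_i$ have the same sign. In that case $\sum|D_i| = 6\,\mathrm{vol}(\mathrm{conv}(v_0,\dots,v_3))$. The largest tetrahedron in $[-m,m]^3$ has volume $(2m)^3/3$, so the target $16m^3$ is the natural ceiling. It is attained by the regular tetrahedron $(m,m,m)$, $(m,-m,-m)$, $(-m,m,-m)$, $(-m,-m,m)$, but there all $D_i$ equal $4m^3$ and $g$ is huge. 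So the whole task is to perturb these four vertices slightly so that the $D_i$ become coprime, while losing only about $16m^2$ in the sum.

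Concretely, I would keep $v_0=(m,m,m)$ and move some coordinates of the other three vertices inward by $1$ (or by a small amount depending on the parity of $m$), in the spirit of the vector $(-m,m-1)$ in $S(m)$. For example, take
$$v_1=(m,-m,-(m-1)),\quad v_2=(-(m-1),m,-m),\quad v_3=(-m,-(m-1),m),$$
with a cyclic pattern of perturbations. I would then expand the four $3\times 3$ determinants as polynomials in $m$. Each will have the form $4m^3 - c_i m^2 + e_i m + f_i$, and the goal is to reach $\sum_i D_i = 16m^3-16m^2+10m-2$ for odd $m$ and $16m^3-16m^2+8m-1$ for even $m$. Once the $D_i$ are known, I would check coprimality by Euclidean manipulations. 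For instance, one shows that some difference $D_i-D_j$ is a small linear polynomial in $m$, and that this polynomial is coprime to another $D_k$ via a resultant-type computation. Positivity of all $D_i$ for $m\ge 2$ is read off directly from the leading term.

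The main obstacle is the gcd condition. Symmetric perturbations tend to make all $D_i$ congruent modulo $m$ or modulo $2$, and this is presumably why the odd and even cases give different constants. I would first try the cyclic choice above. If its four determinants share a factor of $2$ when $m$ is even (or when $m$ is odd), I would break the symmetry by moving one coordinate by $2$ instead of $1$, or by changing the sign pattern of one vertex. This costs an extra $O(m)$ in the length, which matches the $10m-2$ versus $8m-1$ discrepancy between the two parities. The parameters would be tuned by hand on small values of $m$ (say $m=2,3,4,5$), comparing against the claimed bound, before doing the general symbolic verification.
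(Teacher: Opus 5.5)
\paragraph{There is a genuine gap: you never give a construction.} Your reduction is correct. If $v_0,\dots,v_3\in\llbracket -m,m\rrbracket^3$ span $\mathbb{R}^3$ and $\sum a_iv_i=0$ with all $a_i>0$ and $\gcd(a_0,\dots,a_3)=1$, then the integer relations are exactly $\mathbb{Z}(a_0,\dots,a_3)$, so $v_0^{a_0}\cdots v_3^{a_3}$ is a minimal zero-sum sequence. Your volume heuristic also correctly explains the $16m^3$ ceiling. But the statement is an exact lower bound, so its whole content is an explicit family of four vectors whose cofactors are coprime and sum to $16m^3-16m^2+10m-2$ (resp.\ $16m^3-16m^2+8m-1$). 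Your proposal never produces such a family.

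\paragraph{Your candidate fails.} The rotation $(x,y,z)\mapsto(z,x,y)$ fixes your $v_0$ and cycles $v_1\to v_2\to v_3$, so uniqueness of the primitive relation forces $D_1=D_2=D_3$. Explicitly, $|D_0|=(m-1)(4m^2-2m+1)$ and $|D_1|=|D_2|=|D_3|=m(4m^2-2m+1)$. Hence $g=4m^2-2m+1$, and the relation is just $(m-1)v_0+m(v_1+v_2+v_3)=0$ with $v_1+v_2+v_3=(1-m)(1,1,1)$. The resulting sequence has length $4m-1$.

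Your calibration is also off. To leading order, each unit of inward displacement of a vertex costs $4m^2$ in $\sum|D_i|$, since the face opposite a vertex of the regular tetrahedron has normal vector with all entries $\pm 4m^2$. So three unit moves would give $16m^3-12m^2+O(m)$ even with coprime cofactors; the $-16m^2$ in the target corresponds to total displacement $4$. ``Move a coordinate by $2$, change a sign, tune on small $m$'' is a search strategy, not a proof.

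\paragraph{What the paper supplies.} The missing ingredient is precisely the paper's sequence $T(m)$.
\begin{itemize}
\item For odd $m$: the vectors $(-m,-m,-m)$, $(-m,m-1,m)$, $(m,-m,m-2)$, $(m-1,m,-m)$ with multiplicities $k_1=4m^3-8m^2+5m-2$, $k_2=4m^3-2m^2+2m$, $k_3=4m^3-2m^2+m$, $k_4=4m^3-4m^2+2m$.
\item For even $m$: the vectors $(-m,-m,-m)$, $(-m,m-1,m)$, $(m,-(m-1),m-1)$, $(m-1,m,-m)$ with multiplicities $4m^3-6m^2+4m-1$, $4m^3-4m^2+2m$, $4m^3-2m^2+m$, $4m^3-4m^2+m$.
\end{itemize}
In both cases the displacements from the regular tetrahedron are $0,1,2,1$. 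The parity split has a concrete cause: for even $m$, all four odd-case multiplicities are even.

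The paper proves minimality by solving the linear system under the multiplicity bounds. With your reduction it suffices to check three things: the zero sum, that the last three vectors span $\mathbb{R}^3$ (their determinant equals $k_1\neq 0$), and coprimality. For odd $m$, coprimality goes as follows. First, $k_1\equiv -2\pmod m$ gives $\gcd(k_1,m)=1$. Next, $k_1-m(4m^2-2m+1)=-2(3m^2-2m+1)$ and $\gcd(4m^2-2m+1,3m^2-2m+1)=\gcd(m^2,3m^2-2m+1)=1$. Together these give $\gcd(k_1,k_3)=1$ with $k_3=m(4m^2-2m+1)$. Once the right vectors are on the table, your route is a slightly quicker verification than the paper's; without them, nothing is proved.
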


As a direct corollary, Theorem \ref{thmbox3} yields a new lower bound for the Davenport constant of $\llbracket -m,m \rrbracket ^3$, turning out to be twice as good as the one given by \eqref{DDbox3et plus} when $d=3$.

\begin{corollary}
\label{thmbox3456}
For every integer $m \geq 2$, one has
$$
\DD ( \llbracket -m,m \rrbracket ^3) \gtrsim 16m^3.
$$
\end{corollary}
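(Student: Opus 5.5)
The plan is to deduce Corollary \ref{thmbox3456} directly from Theorem \ref{thmbox3}, using the elementary comparison between $\DD^{(4)}$ and $\DD$. No new construction is needed.

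First, I will note the trivial inequality $\DD(X) \geq \DD^{(k)}(X)$, valid for any subset $X$ of an abelian group and any $k$ for which $\DD^{(k)}(X)$ is defined. Any minimal zero-sum sequence over $X$ with support of size $k$ is in particular a minimal zero-sum sequence over $X$, so the maximum defining $\DD(X)$ is taken over a larger family than the one defining $\DD^{(k)}(X)$. Applying this with $X=\llbracket -m,m\rrbracket^3$ and $k=4$ gives $\DD(\llbracket -m,m\rrbracket^3) \geq \DD^{(4)}(\llbracket -m,m\rrbracket^3)$ for every $m \geq 2$.

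Second, I will insert the lower bound of Theorem \ref{thmbox3}. In both parity cases it gives
$$
\DD(\llbracket -m,m\rrbracket^3) \geq 16m^3-16m^2+8m-1,
$$
because for odd $m$ we have $10m-2 \geq 8m-1$. Dividing by $16m^3$, the right-hand side is $1 - 1/m + O(1/m^2)$, which tends to $1$. Hence $\liminf_{m\to\infty} \DD(\llbracket -m,m\rrbracket^3)/(16m^3) \geq 1$, which is the meaning of $\gtrsim 16m^3$.

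Finally, I will compare with \eqref{DDbox3et plus} for $d=3$, whose lower bound is $(2m-1)^3 \sim 8m^3$; the new bound is asymptotically twice as large, as claimed. There is no real obstacle here: all the difficulty lies in Theorem \ref{thmbox3} itself, that is, in exhibiting, for each $m \geq 2$, an explicit minimal zero-sum sequence over $\llbracket -m,m\rrbracket^3$ with support of size $4$ and the stated length. The corollary is a formal consequence of it.
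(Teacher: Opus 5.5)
Your proposal is correct and follows the paper's argument: the paper also presents the corollary as a direct consequence of the trivial inequality $\DD(\llbracket -m,m\rrbracket^3)\ge \DD^{(4)}(\llbracket -m,m\rrbracket^3)$ combined with the explicit lower bound of Theorem~\ref{thmbox3}. Your remark that both parity cases give at least $16m^3-16m^2+8m-1$ is a clean way to handle odd and even $m$ together.
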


Theorem \ref{thmbox3}, which will be proved in Section \ref{d=3}, follows from the explicit construction of minimal zero-sum sequences over $\llbracket -m,m \rrbracket ^3$. As in the two-dimensional case, we conjecture that these constructions are the longest possible ones (i.e. their length is equal to $\DD( \llbracket -m,m \rrbracket ^3)$), and that they are essentially unique up to symmetries and reordering of terms.

As a first step in this direction, we could check via computer search that Theorem \ref{thmbox3} holds with equality when $2 \le m \le 7$. In Section \ref{d=3}, we will also show, using results on discrete Euclidean balls proved in \cite{GP26}, that $\DD^{(4)} ( \llbracket -m,m \rrbracket ^3)$ asymptotically matches the lower bound given by Theorem \ref{thmbox3} as $m$ tends to infinity. 

\begin{theorem}
\label{thmbox3bis}
Asymptotically, as $m \to + \infty$,
$$
\DD^{(4)} (\llbracket -m,m \rrbracket ^3) \sim  16 m^3.
$$
\end{theorem}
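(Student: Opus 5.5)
The lower bound $\liminf \DD^{(4)}(\llbracket -m,m\rrbracket^3)/16m^3 \ge 1$ follows at once from Theorem \ref{thmbox3}, since both branches there equal $16m^3 - O(m^2)$. It remains to prove the upper bound $\DD^{(4)}(\llbracket -m,m\rrbracket^3) \le 16m^3 + o(m^3)$.

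The plan for the upper bound is to use the structure of minimal zero-sum sequences with support of size $d+1$. Let $S$ be such a sequence over $X=\llbracket -m,m\rrbracket^3$, with support $\{v_0,\dots,v_3\}$ and multiplicities $a_0,\dots,a_3$. Minimality forces the $v_i$ to span $\mathbb{R}^3$, and any three of them to be linearly independent; otherwise a proper subsequence could be rescaled to a zero-sum. The relation $\sum a_i v_i=0$ then spans the one-dimensional kernel. By Cramer's rule, $(a_0,\dots,a_3)$ is proportional to $(\pm\Delta_0,\dots,\pm\Delta_3)$, where $\Delta_i$ is the $3\times 3$ determinant of the vectors $v_j$ with $j\neq i$. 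Minimality also forces the $a_i$ to be the primitive integer vector on this line, because $\gcd$ reduction would otherwise give a proper zero-sum subsequence. Hence
$$\|S\| = \sum_i a_i \le \sum_i |\Delta_i| = 6\,\mathrm{vol}(\mathrm{conv}\{v_0,\dots,v_3\}),$$
where we use that $0$ lies in the interior of the simplex, so the four sub-tetrahedra with apex $0$ tile it. This is exactly the mechanism used in \cite{GP26} for the balls $\Bcal^{(d)}_m$, and I would invoke that lemma directly if it is stated there in this form. Otherwise, I would reprove it in the two lines above.

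So the problem reduces to bounding the maximal volume of a tetrahedron with vertices in the cube $[-m,m]^3$. Since volume is convex in each vertex, the maximum is attained at vertices of the cube. The largest tetrahedron inscribed in a cube of side $2m$ is the regular one on alternate vertices, with volume $(2m)^3/3$. This gives
$$\|S\| \le 6\cdot \tfrac{8m^3}{3}=16m^3,$$
which matches the lower bound asymptotically and even yields $\DD^{(4)}\le 16m^3$ exactly.

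The step requiring the most care is the claim that the maximal-volume tetrahedron in a cube has volume one third of the cube. I would prove it by reducing to vertices of the cube, using multilinearity of the determinant in each vertex with the others fixed. After that, only finitely many vertex configurations remain, and they can be checked by symmetry. The alternative route is to cite \cite{GP26}'s ball bound through the inclusion $[-m,m]^3\subset$ ball of radius $\sqrt3 m$. However, that inclusion gives a worse constant, so the direct volume argument is the one I would commit to.
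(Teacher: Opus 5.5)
There is one genuine gap, though it is easy to repair. Your assertion that minimality forces $v_0,\dots,v_3$ to span $\mathbb{R}^3$ (equivalently, that any three of them are linearly independent) is false, and the justification does not work. If the support is coplanar, the integer kernel of $(k_i)\mapsto\sum_i k_iv_i$ has rank at least $2$, but nothing forces it to contain a nonzero vector $k$ with $0\le k\le a$ and $k\neq a$. For instance, when $m\ge 5$, the vectors $(2,0,0),(5,0,0),(-3,0,0),(-4,0,0)$, each taken once, form a minimal zero-sum sequence over $\llb -m,m\rrb^3$ with support of size $4$: no proper subsum of $2,5,-3,-4$ vanishes. Such sequences are counted by $\DD^{(4)}$, and your Cramer/volume argument says nothing about them, since all $\Delta_i$ vanish there. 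So, as written, neither the upper bound nor your claim $\DD^{(4)}(\llb -m,m\rrb^3)\le 16m^3$ is established.

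What is true is a dichotomy. If the support spans $\mathbb{R}^3$, the positive relation $\sum a_iv_i=0$ forces any three $v_i$ to be independent, and your argument goes through, giving $\|S\|\le\sum_i|\Delta_i|\le 16m^3$. Otherwise the support lies in a plane $P$ through $0$. Then some $e_j\notin P$, so the projection deleting the $j$-th coordinate is injective on $P$, hence on all subsums of $S$. It therefore maps $S$ to a minimal zero-sum sequence of the same length over $\llb -m,m\rrb^2$, and \eqref{DDbox} gives $\|S\|\le \frac{5\pi}{2}m^2+O(m)=o(m^3)$. With this case added, the asymptotic statement follows. Your exact inequality would additionally need an explicit $O(m^2)$ planar bound.

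Modulo this, your route genuinely differs from the paper's. The paper computes no volume itself. It uses the inclusion $\llb -m,m\rrb^3\subseteq\Bcal^{(3)}_{\lceil\sqrt3 m\rceil}$ together with $\DD^{(4)}(\Bcal^{(3)}_R)\sim\frac{16}{3\sqrt3}R^3$ (Theorem 5 of \cite{GP26}), and so inherits everything, degenerate supports included, from that theorem.

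Your reason for discarding this route is mistaken: $\frac{16}{3\sqrt3}(\sqrt3 m)^3=16m^3$, exactly the same constant. This is no coincidence, since the regular tetrahedron on alternate vertices of the cube is inscribed in the circumscribed sphere and is the maximal-volume tetrahedron in that ball.

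The paper's route is a two-line citation. Yours is self-contained and, in the nondegenerate case, gives the explicit non-asymptotic bound $\sum_i|\Delta_i|\le 16m^3$ from elementary facts: convexity of $|\det|$ in each vertex, and a finite check over cube vertices (for example, $|\det|\le 2$ for $3\times 3$ $0/1$ matrices).
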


It appears tempting to conjecture that for every integer $d \ge 1$, the ratio $\DD^{(d+1)} (\llbracket -m,m \rrbracket ^d)/m^d$ has a limit when $m$ tends to infinity. This is clear when $d=1$, and it follows from Theorems \ref{th1} and \ref{thmbox3bis} when $d \in \{2,3\}$. However, this problem remains open when $d \ge 4$. 

\section{Two-dimensional boxes}
\label{sectThm4}

We start by proving the lower bounds in Theorem \ref{th1} and Corollary \ref{improvement44}. To do so, it suffices to show that the sequence $S(m)$, which was defined in Section \ref{newresults} just before the statement of Theorem \ref{th1}, is a minimal zero-sum sequence. 

First, $S(m)$ is a zero-sum sequence since
$$\sigma(S(m))=
(q(m)(m-1)+m) 
\begin{pmatrix}
m \\
m 
\end{pmatrix}
+ (2m^2-mq(m))
\begin{pmatrix}
-m \\
m-1 
\end{pmatrix}
+ (2m^2-m)
\begin{pmatrix}
m-q(m) \\
-m
\end{pmatrix}
= 
\begin{pmatrix}
0 \\
0
\end{pmatrix}.
$$

Now, let us show that $S(m)$ is minimal. 
Assume that $k_1,k_2,k_3$ are three non-negative integers satisfying 
\begin{equation}
\label{bound0}
k_1 \leq q(m)(m-1)+m,\quad  k_2 \leq 2m^2-mq(m) \quad {\rm and }\quad  k_3 \leq 2m^2-m,
\end{equation}
and such that the sequence consisting of $k_1$ copies of $\begin{pmatrix}
m \\
m 
\end{pmatrix}$, $k_2$ copies of $\begin{pmatrix}
-m \\
m-1 
\end{pmatrix}$ and $k_3$ copies of $\begin{pmatrix}
m-q(m) \\
-m
\end{pmatrix}$ is a zero-sum sequence.

These three integers must satisfy the system
\begin{equation}
\label{2coord}
\begin{cases}
k_1m-k_2m+k_3(m-q(m))=0, \\
k_1m+k_2(m-1)-k_3m=0.
\end{cases}
\end{equation}
Adding $(m-1)$ times the first equation of \eqref{2coord} to $m$ times the second one yields

\begin{equation}
\label{k1}
k_1m(2m-1) = k_3(q(m)(m-1)+m).
\end{equation}

We observe that $\gcd (m(2m-1), q(m)(m-1)+m)=1$. Indeed, let $p$ be any prime dividing this gcd.
It is easily noticed that $p \nmid m$, since otherwise we would have $p \mid q(m)(m-1)$ so that $p \mid q(m)$ and finally $p \mid \gcd(m,q(m))=1$, a contradiction. 
Therefore, $p \mid 2m-1$ so that
$$p \mid (q(m)(m-1)+m)-(2m-1)=(q(m)-1)(m-1).$$

It follows that $p \mid m-1$ since otherwise, we would have $p \mid q(m)-1$ which implies that $p < q(m)$ that is to say $p \mid m$, a contradiction. Finally, we obtain $p \mid 2m-1-(m-1)=m$, a contradiction once again.

As a conclusion, \eqref{bound0}, \eqref{k1} and the just-proved coprimality of $m(2m-1)$ and $q(m)(m-1)+m$ imply that 
$$m(2m-1) \mid k_3 \le m(2m-1),$$ 
so that either $k_3=0$ in which case $k_1=0$ so that $k_2=0$, or $k_3=m(2m-1)$ in which case $k_1=q(m)(m-1)+m$ so that $k_2=2m^2-mq(m)$. 

This proves our assertion on the minimality of $S(m)$, and gives
$$
\DD (\llbracket -m,m\rrbracket^2) \geq \DD^{(3)} (\llbracket -m,m\rrbracket^2) \geq  \|S(m) \|  = 4m^2-q(m),$$
which is the desired lower bound of Theorem \ref{th1} and Corollary \ref{improvement44}.

We continue with a useful lemma on the arithmetic function $q$ which clarifies the asymptotic behaviour of this lower bound. 
Recall that, for every integer $m \ge 2$, the number $q(m)$ denotes the smallest prime number not dividing $m$. 

\begin{lemma}
\label{qq}
For every integer $m \geq 2$, one has
$$
q(m) \leq 1 + 4 \log m.
$$
Moreover, if $m \geq 3$, then
$$
q(m)<m.
$$
\end{lemma}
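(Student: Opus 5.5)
The plan is a primorial argument for the first inequality and a direct consecutive-integers argument for the second.

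\emph{First inequality.} Write $q=q(m)$. By definition every prime $p<q$ divides $m$, so the primorial $P=\prod_{p<q} p$ divides $m$, and hence $m\ge P$. It therefore suffices to prove, for every prime $q$,
\[
q \le 1+4\log \prod_{p<q}p ,
\]
where for $q=2$ the product is empty. The small primes are checked by hand. For $q=2$ we need $2\le 1+4\log m$, which holds since $m\ge 2$ gives $4\log 2\approx 2.77$. For $q=3$ we have $m\ge 2$, and $1+4\log 2\approx 3.77\ge 3$. For $q=5$ we have $m\ge 6$, and $1+4\log 6\approx 8.2$. For larger $q$ the margin only grows.

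\emph{General case.} The cleanest route is a Chebyshev-type lower bound for $\theta(x)=\sum_{p\le x}\log p$. If $q$ is prime and $q'$ is the previous prime, we need $\theta(q')\ge (q-1)/4$. Bertrand's postulate gives $q\le 2q'$, so it is enough to have $\theta(x)\ge x/2$ for $x\ge x_0$, with a direct check below $x_0$. For instance, the elementary estimate $\theta(x)\ge x\log 2-O(\sqrt{x}\log x)$, or Rosser--Schoenfeld's $\theta(x)>x/2$ for $x\ge 11$, would suffice. Alternatively, one can avoid analytic input by inducting over consecutive primes. Each step multiplies the primorial by $p\ge 2$, adding at least $4\log 2>2.7$ to the right-hand side, while prime gaps are controlled by Bertrand. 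This makes the elementary route less tidy, so I would cite a standard explicit bound such as $\theta(x)\ge x/2$ (valid for $x\ge 11$, with small cases checked numerically). This is the main obstacle: choosing an explicit, citable Chebyshev bound and verifying the finitely many remaining primes by hand, e.g.\ $q\le 13$, against the primorial values $2,6,30,210,2310$.

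\emph{Second inequality.} For $m\ge 3$, consider the consecutive integers $m-1$ and $m$. They are coprime, and $m-1\ge 2$, so $m-1$ has a prime factor $p\le m-1$ that does not divide $m$. Hence $q(m)\le p\le m-1<m$.
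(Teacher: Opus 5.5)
Your proof is correct. It splits in two: the first inequality follows the paper's route, and the second is genuinely different.

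\textbf{First inequality.} As in the paper, the primes below $q(m)$ divide $m$, so $\vartheta(q(m)-1)\le\log m$, and an explicit Chebyshev-type lower bound finishes after a finite check. Only the cited bound differs.
\begin{itemize}
\item The paper checks $q(m)\in\{2,3,5,7\}$ by hand, then applies Dusart's $\vartheta(x)\ge x-3.965\,x/\log^2 x$ at $x=q(m)-1\ge 10$. This gives a factor of about $0.252$, just enough for the constant $4$.
\item Your $\theta(x)>x/2$ for $x\ge 11$ has more slack, so Bertrand's postulate is superfluous. Since $\theta(q')=\theta(q-1)$, you can apply the bound directly at $x=q-1$.
\item The bound is indeed true: it follows from Rosser--Schoenfeld's $\theta(x)>x(1-1/\log x)$ for $x\ge 41$, plus a direct check on $[11,41)$.
\item Your two alternative sketches would not work alone. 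The $O(\sqrt{x}\log x)$ estimate has no explicit constant. The induction over consecutive primes fails because Bertrand does not bound the gap $q-q'$ by $4\log q'$. You rightly fall back on an explicit citation.
\end{itemize}

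\textbf{Second inequality.} The paper deduces $q(m)<m$ from the first inequality when $m\ge 11$, using $1+4\log m<m$, and checks $3\le m\le 10$ directly. Your argument is simpler: any prime factor of $m-1\ge 2$ cannot divide $m$. It is self-contained, needs no analytic input or case analysis, and gives $q(m)\le m-1$ outright, which is exactly the form the paper uses later in the proof of Lemma~\ref{lelele6}.
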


\begin{proof}
Assume that $q(m) \leq 10$. Since it is a prime number, $q(m)$ can be equal to $2,3,5$ or 7.
If $q(m)=2$, $m \ge 3$; if $q(m)=3$, $m \ge 2$; if $q(m)=5$, $m \ge 6$ and 
if $q(m)=7$, $m \ge 30$. In these four cases, an immediate computation shows that 
the inequality is valid.

Let us now assume that $q(m) \geq 11$.
By definition, the product 
$${\displaystyle \prod_{p<q(m),\ p\, {\rm prime}} p}$$ 
divides $m$, thus, 
using the classical number-theoretic notation $\vartheta$ for the Chebyshev function \cite{HW}, we obtain
$$
\vartheta (q(m)-1) = \sum_{p<q(m),\ p\, {\rm prime}} \log p \leq \log m.
$$ 
Dusart proved (this is Theorem 4.2 in \cite{Dusart}) that
$$
 \forall x \geq 2 :\quad \quad  \vartheta (x) \geq x - 3.965 \frac{x}{\log^2 x}.
$$
We therefore have
$$
\frac14 (q(m)-1) \leq \left( 1 - \frac{3.965}{\log^2 10} \right) (q(m)-1) \leq \vartheta (q(m)-1) \leq \log m,
$$
due to the fact that $q(m)$ is by definition larger than $10$. The first assertion follows.

Finally, the second assertion follows from the first if $m \geq 11$ and from a direct computation if $3 \leq m \leq 10$.

\end{proof}

We now turn to the much more involved proof of the upper bound in Theorem \ref{th1}.

The case $m=2$ can be considered independently since an exhaustive computer search is possible. It will 
simplify the presentation of the forthcoming argument (mainly due to the fact that the second part of Lemma \ref{qq} 
cannot be applied when $m=2$).
One checks that
$$
\DD^{(3)} (\llbracket -2,2 \rrbracket ^2) = 13 = 4 \times 2^2 -3,
$$
and that, up to the eight symmetries of the square (generated by $\varphi : (x,y) \mapsto (-x,y)$ and $\psi : (x,y) \mapsto (y,x)$) and reordering of terms, the only minimal zero-sum sequence over $\llbracket -2,2 \rrbracket ^2$ having a support of size $3$ and length $13$ is the one consisting of 

$$5 \text{ copies of } \begin{pmatrix}
2 \\
2
\end{pmatrix},
\ 2 \text{ copies of } \begin{pmatrix}
-2 \\
1
\end{pmatrix}, 
\ \text{ and } 6 \text{ copies of } \begin{pmatrix}
-1 \\
-2
\end{pmatrix}
.$$

Since $q(2)=3$, the result is proved for $m=2$.

From now on, we assume $m \geq 3$. We start by a reformulation of the problem for which we need some notation.
Let 
$$
A =\left(\begin{array}{ccc}
a_{1,1} &   a_{1,2} & a_{1,3}  \\ 
a_{2,1} &   a_{2,2} & a_{2,3}  \\ 	
\end{array} 
\right)
$$
be a $2 \times 3$ matrix with coefficients in $\llbracket -m, m \rrbracket$. 
For $i=1,2$ or $3$, we denote by $A_i$ the $2 \times 2$ submatrix of $A$ obtained by erasing the $i$-th column
and let 
$$
\Delta (A)=\gcd \big( \det(A_1),\det(A_2),\det(A_3) \big).
$$ 
From now on, we define ${\mathcal C}$ to be the set of $2 \times 3$ matrices with coefficients in $\llbracket -m, m \rrbracket$ 
such that $\det(A_1),\det(A_2),$ $\det(A_3) \neq 0$ and $\Delta (A)=1$.

Finally, we shall say that two  $2 \times 3$ matrices $A$ and $B$ with coefficients in $\llbracket -m, m \rrbracket$ are 
{\em analogous} if $B$ can be deduced from $A$ by exchanging columns and applying one of the eight symmetries of the square 
already mentioned (those generated by $\varphi : (x,y) \mapsto (-x,y)$ and $\psi : (x,y) \mapsto (y,x)$). Notice that 
two analogous matrices $A$ and $B$ satisfy $\Delta (A) = \Delta (B)$.

\begin{lemma}
Let $m \geq 3$ and $S$ be a minimal zero-sum sequence with a support of size 3, $\{s_1, s_2, s_3 \} \subseteq \llbracket -m, m \rrbracket^2$, 
such that $\DD^{(3)} (\llbracket -m,m \rrbracket ^2)  = \|S\|$.
Let $A$ be the matrix, the $i$-th column of which is $s_i$ ($i=1,2,3$), then we have $A  \in  {\mathcal C}$ and
$$
\DD^{(3)} (\llbracket -m,m \rrbracket ^2)  = \sum_{i=1}^3 | \det A_i |.    
$$
\end{lemma}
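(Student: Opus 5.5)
The plan is to translate minimality into a statement about the integer kernel of $A$, and then use the lower bound $\DD^{(3)}(\llbracket -m,m\rrbracket^2)\ge 4m^2-q(m)$ already established to rule out degenerate cases. Write $S$ as $k_1$ copies of $s_1$, $k_2$ copies of $s_2$ and $k_3$ copies of $s_3$, with every $k_i\ge 1$ since the support has size $3$. The zero-sum condition says exactly that $k=(k_1,k_2,k_3)^T$ lies in $\ker A\cap\ZZ^3$. The minimality of $S$ says exactly that no integer vector $k'\neq 0,k$ with $0\le k'_i\le k_i$ lies in this kernel.

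\emph{Step 1: $A$ has rank $2$.} Suppose the $s_i$ are collinear. They lie on a line $\mathbb{R}u$ with $u$ primitive, so $s_i=c_iu$ with $c_i\in\llbracket -m,m\rrbracket$. Then $c_1,\dots$ (with multiplicities) is a minimal zero-sum sequence over $\llbracket -m,m\rrbracket$, so $\|S\|\le \DD(\llbracket -m,m\rrbracket)=2m-1$. This is smaller than the lower bound $4m^2-q(m)$ proved above, contradicting maximality. Hence $A$ has rank $2$ and $\ker A$ is a line over $\mathbb{Q}$.

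\emph{Step 2: identify $k$.} By the cross-product (Cramer) identity, this line is spanned by $w=(\det A_1,-\det A_2,\det A_3)$, which is nonzero by rank $2$. The lattice $\ker A\cap\ZZ^3$ is generated by the primitive vector $v=w/\Delta(A)$. Since $k$ is a positive element of this lattice, $k=tv$ with $t\ge 1$ an integer, up to replacing $v$ by $-v$. If $t\ge 2$, the vector $v$ itself would give a proper nonempty zero-sum subsequence, contradicting minimality. So $k=\pm v$. All $k_i\neq 0$, so all $\det A_i\ne 0$, and
$$
\|S\|=\frac{1}{\Delta(A)}\sum_{i=1}^3|\det A_i| .
$$

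\emph{Step 3: show $\Delta(A)=1$.} Each $2\times 2$ minor with entries in $\llbracket -m,m\rrbracket$ satisfies $|\det A_i|\le 2m^2$. If $\Delta(A)\ge 2$, then $\|S\|\le 3m^2$. But Lemma \ref{qq} gives $q(m)<m$ for $m\ge 3$, so
$$
4m^2-q(m)>4m^2-m\ge 3m^2,
$$
contradicting maximality. Thus $\Delta(A)=1$, so $A\in\mathcal C$, and the displayed formula for $\DD^{(3)}$ follows from Step 2.

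The only delicate point is Step 3's comparison. This is precisely where the hypothesis $m\ge 3$ and the second part of Lemma \ref{qq} are needed. The rest is standard lattice bookkeeping, namely that $w/\Delta(A)$ is the primitive generator of the integer kernel.
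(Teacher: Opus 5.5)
Your proof is correct and follows the same route as the paper. You rule out rank $1$ via $\DD(\llbracket -m,m\rrbracket)=2m-1$, note that the integer kernel is generated by $(\det A_1,-\det A_2,\det A_3)/\Delta(A)$ so that $\|S\|=\sum_i|\det A_i|/\Delta(A)$, and force $\Delta(A)=1$ from $|\det A_i|\le 2m^2$ together with the lower bound $4m^2-q(m)$. Deriving $\det A_i\neq 0$ directly from $k=\pm v$ having nonzero entries is a bit cleaner than the paper's proportionality remark, and Step 3 only needs $q(m)<m^2$, not the full strength of $q(m)<m$.
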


\begin{proof}
Firstly, we notice that if $A$ has rank 1, then the three columns of $A$ are collinear. 
In this case, the length of our minimal zero-sum sequence $S$ with support $\{  s_1, s_2, s_3\}$ 
would be at most the upper bound obtained for the one-dimensional case, which is $2m-1$. 
Yet, it follows from Theorem \ref{th1} that
$$
\DD^{(3)} (\llbracket -m,m \rrbracket ^2)  \geq 4m^2 -q(m)
$$
which gives a contradiction. We may therefore assume that $A$ has rank 2.
Moreover, we must have 
\begin{equation}
\label{detnonnul}
\det(A_1),\det(A_2),\det(A_3) \neq 0
\end{equation}
for, if say $\det(A_1)=0$ then $s_2$ and $s_3$ are proportional but in this case, 
in any linear combination of $s_1, s_2$ and $s_3$ equal to zero, the coefficient of $s_3$ itself must be zero 
and we are back again to the one-dimensional case.

By Cramer's method, it is readily seen that the  one-dimensional kernel of $A$ is equal to the set of multiples of 
$$
r = \frac{1}{\Delta (A)}
\begin{pmatrix}
\det(A_1) \\
 -\det(A_2)\\
 \det(A_3)
\end{pmatrix}
$$
a vector with integral coefficients, and indeed the one with the smallest integral coefficients. 
In particular, all the elements of $(\ker A ) \cap \ZZ^3$ are integral multiples of $r$.
Denoting by $\ell_A$ the smallest  $\ell_1$-norm of a non-zero  
integral solution $X$ to the system $AX=0$, we find 
$$
\ell_A = \frac{\sum_{i=1}^3 | \det A_i |}{\Delta (A)},
$$
and we observe that $\ell_A$ is the length of our minimal zero-sum sequence $S$.
Thus, by assumption,
\begin{equation}
\label{D3par les det}
\DD^{(3)} (\llbracket -m,m \rrbracket ^2)  = \ell_A = \frac{\sum_{i=1}^3 | \det A_i |}{\Delta (A)}.
\end{equation}

Again, by the lower bound we obtained in the previous section (see Theorem \ref{th1}), we know that
\begin{equation}
\label{rho}
\ell_A  \geq 4m^2 -q(m).
\end{equation}
Since for $i=1,2,3$,  $ | \det A_i |\leq 2 m^2$, we obtain 
$$
\ell_A \leq  \frac{6m^2}{\Delta (A)} 
$$
and using \eqref{rho}, we get
$$
\Delta (A) = \gcd_{i \in \{1,2,3 \} }( \det A_i) \leq \frac{6m^2}{4m^2 -q(m)} <2
$$
in view of $q(m) \leq m <m^2$ (for $m \geq 3$),  by Lemma \ref{qq}.
From this we deduce  
$$
\Delta (A) = \gcd_{i \in \{1,2,3 \} }  ( \det A_i)=1.
$$
This, with \eqref{detnonnul}, proves that $A \in {\mathcal C}$. Moreover, by \eqref{D3par les det},
$$
\DD^{(3)} (\llbracket -m,m \rrbracket ^2)  = \sum_{i=1,2,3} | \det A_i |.
$$
\end{proof}

Let us introduce ${\mathcal C}'$ the subset of ${\mathcal C}$ consisting of matrices $A$ satisfying the sign pattern given by
$$
\left(\begin{array}{ccc}
+& -& +\\ 
+&+&-  \\ 	
\end{array} 
\right).
$$
We show how to restrict our research on this subset of ${\mathcal C}$.

\begin{lemma}
\label{lelele6}
Let $m \geq 3$ and $S$ be a minimal zero-sum sequence with a support of size 3, $\{s_1, s_2, s_3 \} \subseteq \llbracket -m, m \rrbracket^2$, such that 
$\DD^{(3)} (\llbracket -m,m \rrbracket ^2)  = \|S\|$.
Let $A$ be the matrix, the $i$-th column of which is $s_i$ ($i=1,2,3$).
Then $A$ is analogous to a matrix $B$ belonging to ${\mathcal C'}$ and
$$
\DD^{(3)} (\llbracket -m,m \rrbracket ^2)  = \sum_{i=1}^3 | \det B_i |.   
$$
\end{lemma}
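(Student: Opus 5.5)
The strategy is to start from the previous lemma and use the symmetries only to normalise signs. By that lemma, $A \in \mathcal{C}$ and $\DD^{(3)}(\llbracket -m,m\rrbracket^2)=\sum_i|\det A_i|$. Exchanging columns and applying $\varphi$, $\psi$ only permutes the $2\times 2$ minors and multiplies them by $\pm 1$. Hence any matrix $B$ analogous to $A$ satisfies $\{|\det B_i|\}=\{|\det A_i|\}$ as multisets and $\Delta(B)=\Delta(A)=1$. It follows that $B\in\mathcal{C}$ and that the formula transfers to $B$. The whole content of the lemma is therefore to find a symmetry and a column permutation putting $A$ into the sign pattern of $\mathcal{C}'$. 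I read $+$ and $-$ in that pattern as $\ge 0$ and $\le 0$; if strict signs are intended, zero entries need an extra argument.

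The geometric input is the following. Since $S$ is a zero-sum sequence whose support is exactly $\{s_1,s_2,s_3\}$, there are \emph{positive} integers $k_1,k_2,k_3$ with $k_1s_1+k_2s_2+k_3s_3=0$. By \eqref{detnonnul}, no two of the $s_i$ are collinear. Hence $0$ lies in the interior of the triangle $s_1s_2s_3$, and the three directions cut the circle into three arcs, each of angle $<\pi$. Taking coordinates, the positive relation $\sum k_i s_i=0$ shows that neither the first coordinates nor the second coordinates of the $s_i$ can all be nonzero with the same sign.

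Using the reflections $\varphi:(x,y)\mapsto(-x,y)$ and $\psi\varphi\psi:(x,y)\mapsto(x,-y)$, which act independently on the two coordinates, I can assume that at most one column has negative first coordinate and at most one has negative second coordinate. If these are two different columns, a permutation of the columns gives exactly the pattern
$$\left(\begin{array}{ccc} + & - & + \\ + & + & - \end{array}\right).$$
The remaining case is a single column lying in the open third quadrant, with the other two in the closed first quadrant. In that case I apply a rotation by $\pm\pi/2$ (a composition of $\varphi$ and $\psi$). I choose its direction according to which of the two first-quadrant vectors is closer to an axis. I then check, using that consecutive arcs are $<\pi$ and that no two columns are collinear, that after rotation and possibly a further reflection we are back in the previous case.

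I expect this last case analysis, together with the bookkeeping of columns having a zero coordinate (vectors on the axes), to be the main obstacle. The cleanest route is probably to argue with the cyclic order of the three directions rather than with individual coordinates. Among the eight images of the configuration under the dihedral group of the square, one should be chosen so that $s_1$ lies in the closed first quadrant, $s_2$ in the closed second quadrant and $s_3$ in the closed fourth quadrant. This is possible because the three arcs all have angle $<\pi$, so the directions cannot all avoid a suitable pair of opposite quadrant boundaries. Once this normalisation is achieved, the equality $\DD^{(3)}(\llbracket -m,m\rrbracket^2)=\sum_i|\det B_i|$ follows immediately from the invariance noted above.
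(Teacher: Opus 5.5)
Your reduction to a pure sign-normalisation problem is fine, and so is your first case. You do need that after the reflections each row has \emph{exactly} one negative entry, but this follows from the positive relation and non-collinearity. The gap is your remaining case, and equally your ``cleanest route'': neither uses the hypothesis that $S$ is \emph{extremal}, and without it the claim is false.

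Take $s_1=(-1,2)$, $s_2=(-2,1)$, $s_3=(1,-1)$. The minors are $1,-1,3$, so $s_1+s_2+3s_3=0$ gives a minimal zero-sum sequence with support of size $3$. No two columns are collinear, and all three arcs are $<\pi$. After $\varphi$ the columns become $(1,2),(2,1),(-1,-1)$, which is precisely your remaining case. Every element of the dihedral group of the square maps open quadrants to open quadrants and opposite ones to opposite ones. So every image of this configuration has two vectors in one open quadrant and the third in the opposite open quadrant. A point of an open quadrant lies in only one closed quadrant. Hence no image, with any column permutation, puts the three vectors into the closed first, second and fourth quadrants respectively. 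No rotation-plus-reflection argument can handle that case.

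What is missing is the lower bound $\DD^{(3)}(\llbracket -m,m\rrbracket^2)\ge 4m^2-q(m)$, which is exactly what the paper exploits. The paper orders the columns so that $|\det A_3|$ is maximal and deduces $|\det A_3|\ge \frac{11}{9}m^2>m^2$. This forces the two products in $\det A_3$ to have opposite signs, which pins down the remaining signs.

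In your setting the repair is short. In the remaining case, let $w$ be the column in the open third quadrant and $u,v$ the columns in the closed first quadrant. Each of $\det(u,v)$, $\det(u,w)$, $\det(v,w)$ is a difference of two products of the same sign, each of absolute value at most $m^2$. Hence each has absolute value at most $m^2$, and $\sum_i|\det A_i|\le 3m^2<4m^2-q(m)$, since $q(m)<m$ for $m\ge 3$ by Lemma \ref{qq}. This is a contradiction, so the remaining case cannot occur for an extremal $S$, and your first case completes the proof. Your reading of the signs as $\ge 0$ and $\le 0$ matches what the paper proves and later uses.
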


We notice that two analogous matrices $A$ and $B$ satisfy $\sum_{i=1}^3 | \det A_i | = \sum_{i=1}^3 | \det B_i |$.
Lemma \ref{lelele6} expresses the fact that in order to compute the maximum of $\sum_{i=1}^3 | \det A_i |$ over ${\mathcal C}$
it is enough to restrict ourselves to matrices belonging to ${\mathcal C}'$ and that every matrix of ${\mathcal C}$ 
attaining the value $\DD^{(3)} (\llbracket -m,m \rrbracket ^2)$ is, up to a reordering of the columns and symmetries, 
a matrix of ${\mathcal C}'$.

\begin{proof}
We denote
$$
A = \left(\begin{array}{ccc}
a_{1,1} &   a_{1,2} & a_{1,3}  \\ 
a_{2,1} &   a_{2,2} & a_{2,3}  \\ 	
\end{array} 
\right).
$$

By permutation of the columns, we may assume that $\max_{i=1,2,3} | \det A_i | =| \det A_3 |$ and 
by the symmetry of coordinates and permutation of the vectors
$$
a_{1,1} , a_{2,1}  \geq 0.
$$
Such a symmetry is tantamount to multiplying by $-1$ a line or a column, 
which does not change the absolute values of the determinants
and is compatible with our assumption that $| \det A_3 | = \max_{i \in \{ 1,2,3\}} | \det A_i |$.

By the zero-sum sequence property, there is a sum of the three columns of $A$ with positive coefficients which is equal to zero. 
Thus, we must have $ a_{1,2}$ or $ a_{1,3} \leq 0$. 
We may assume without loss of generality that 
$$
a_{1,2}  \leq 0.
$$

Since by assumption $\DD^{(3)} (\llbracket -m,m \rrbracket ^2)  \geq 4m^2 -q(m)$  and $q(m) \leq  m-1 \leq  m^2 / 3 $ 
(for any $m \geq 3$, this bound follows from Lemma \ref{qq})		
$$
| \det A_3 | = \max_{i \in \{ 1,2,3\}} | \det A_i | \geq \frac{\sum_{i=1,2,3} | \det A_i |}{3} = 
\frac{\DD^{(3)} (\llbracket -m,m \rrbracket ^2) }{3} \geq  \frac{4m^2 -q(m) }{3}  \geq    \frac{11}{9}\ m^2.
$$
This yields
$$
| a_{1,1} a_{2,2} - a_{2,1}a_{1,2}| \geq \frac{11}{9} \ m^2 > m^2
$$
and the two terms $a_{1,1} a_{2,2}$ and $a_{2,1}a_{1,2}$ must be of opposite sign.
As $a_{1,2}a_{2,1}\leq 0$ and $a_{1,1} \geq 0$, we deduce 
$$
a_{2,2} \geq 0.
$$

Considering the second coordinate of the three terms of the support implies, by the zero-sum sequence property, 
$$
a_{2,3}  \leq 0.
$$

For the matrix $A$, we thus have only two possible combinations of signs
$$
\left(\begin{array}{ccc}
+& -& - \\ 
+&+&-  \\ 	
\end{array} 
\right)
$$
or
$$
\left(\begin{array}{ccc}
+& -& +\\ 
+&+&-  \\ 	
\end{array} 
\right)
$$
depending on the sign of $a_{1,3}$.

In the first case, exchanging the first two vectors and applying a symmetry (the first coordinate is replaced by its opposite), 
we arrive in the second case situation. Therefore, without restriction, we may assume that we are in the case
$$
\left(\begin{array}{ccc}
+& -& +\\ 
+&+&-  \\ 	
\end{array} 
\right).
$$

Denoting by $B$ the matrix obtained after all the transformations we performed from the beginning, starting from $A$, 
we obtain a matrix $B$ analogous to $A$ such that $B \in {\mathcal C}'$ and 
$$
\DD^{(3)} (\llbracket -m,m \rrbracket ^2)  = \sum_{i=1}^3 | \det A_i |=  \sum_{i=1}^3 | \det B_i |.    
$$
\end{proof}

In view of the two previous lemmas, computing $\DD^{(3)} (\llbracket -m,m \rrbracket ^2)$ consists in 
maximising $\sum_{i=1}^3 | \det A_i |$ on matrices of ${\mathcal C}'$ :
$$
\DD^{(3)} (\llbracket -m,m \rrbracket ^2) = \max_{A \in {\mathcal C}'} \sum_{i=1}^3 | \det A_i |.
$$
Using the sign pattern of ${\mathcal C}'$, we may reorganise  the sum of determinants appearing in this maximum: 
the function we have to maximise for 
$A \in{\mathcal C}'$ is
\begin{eqnarray}
&  =  &  |a_{1,1} a_{2,2} - a_{1,2}a_{2,1}|+ |a_{1,3} a_{2,1} - a_{2,3}a_{1,1}| +    |a_{2,3}a_{1,2}-a_{1,3} a_{2,2} |  \nonumber \\
	&  =  &    ( a_{2,2} - a_{2,3}) a_{1,1}  +       (a_{1,3}   - a_{1,2} )     a_{2,1} 	   +      |a_{2,3}a_{1,2}-a_{1,3} a_{2,2} | \label{defdef}
\end{eqnarray}
We define this quantity as $f (a_{1,1} ,a_{1,2} ,a_{1,3},a_{2,1} , a_{2,2} , a_{2,3})$.

We now fix $M \in {\mathcal C}'$, a matrix attaining the maximum
$\max_{A \in {\mathcal C}'}\sum_{i=1}^3 | \det A_i | = \DD^{(3)} (\llbracket -m,m \rrbracket ^2) $ and write
\begin{equation}
\label{defofM}
M=\left(\begin{array}{ccc}
x_{1} &   y_{1} & z_{1}  \\ 
x_{2} &   y_{2} & z_{2}  \\ 	
\end{array} 
\right).
\end{equation}
We derive some properties on it. For this purpose, we denote by
$$
{\mathcal M}^{\ast}_{2,3}(\llbracket -m,m \rrbracket ^2)
$$
the set of all $2 \times 3$ matrices with coefficients in $\llbracket -m, m \rrbracket$ that satisfy the same sign pattern
$$
\left(\begin{array}{ccc}
+& -& +\\ 
+&+&-  \\ 	
\end{array}
\right)
$$
as the elements in ${{\mathcal C}'}$, but not necessarily the other conditions.

\begin{lemma}
\label{lelemme1}
One has $x_{1}  =m$.
\end{lemma}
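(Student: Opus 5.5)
The plan is a local improvement argument. By \eqref{defdef}, $f$ depends on $x_1$ only through the term $(y_2-z_2)x_1$, and on $\mathcal{M}^{\ast}_{2,3}(\llbracket -m,m \rrbracket^2)$ the sign pattern gives $y_2\ge 0\ge z_2$. Moreover $y_2-z_2>0$, since otherwise $y_2=z_2=0$ and $\det M_1=y_1z_2-y_2z_1=0$, contradicting $M\in\mathcal{C}'$. So if $x_1\le m-1$, replacing $x_1$ by $x_1+t$ with $1\le t\le m-x_1$ keeps the sign pattern and strictly increases $f$, by $t(y_2-z_2)$. If some such modified matrix $M^{(t)}$ still lies in $\mathcal{C}'$, this contradicts the maximality of $M$, and we are done. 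The whole proof therefore reduces to showing that, for at least one admissible $t$, $M^{(t)}$ has three nonzero $2\times2$ minors with gcd $1$.

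Next I would write the minors of $M^{(t)}$ explicitly. The minor $\det M^{(t)}_1=y_1z_2-y_2z_1=:D_1\neq 0$ does not depend on $t$. The other two are affine in $t$:
\[
\det M^{(t)}_3=(x_1+t)y_2-x_2y_1, \qquad \det M^{(t)}_2=(x_1+t)z_2-x_2z_1 .
\]
With the sign pattern, both are nonzero for $t\ge 0$: $(x_1+t)y_2\ge 0\ge x_2y_1$ with not both terms zero, and similarly for the other minor. Nonvanishing is automatic except in degenerate boundary cases such as $y_2=0$ and $x_2=0$, which I would rule out directly using $\det M_3\ne 0$. Now let $p$ be a prime dividing $\Delta(M^{(t)})$. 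Eliminating $x_1+t$ gives
\[
p \mid z_2\det M^{(t)}_3-y_2\det M^{(t)}_2 = -x_2D_1,
\]
which is automatic because $p\mid D_1$. The useful observation is the following. If $p$ divides $\Delta(M^{(t)})$ and $\Delta(M^{(t+1)})$ for two consecutive values, subtracting the minors shows $p\mid y_2$ and $p\mid z_2$. Then $p\mid x_2y_1$ and $p\mid x_2z_1$, and one checks that $p$ already divides $\Delta(M)=1$ unless $p\nmid x_2$. If $p\nmid x_2$, then $p\mid y_1$, $p\mid z_1$ and $p\mid y_2,z_2$, so $p$ divides every minor for all $t$, including $t=0$, again contradicting $\Delta(M)=1$. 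Hence no prime obstructs two consecutive values of $t$. In particular, when $x_1\le m-2$, one of $t=1,2$ should work, after a further argument that a prime obstructing $t=1$ and a different prime obstructing $t=2$ can be avoided. For this I would try $t=1$ first. If $\Delta(M^{(1)})>1$, I would exploit the freedom to also move $x_2$ by one unit, within range, since $f$ is nondecreasing in $x_2$ as well, with coefficient $z_1-y_1\ge 0$.

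The main obstacle is exactly this gcd-preservation step. The monotonicity of $f$ is trivial, but an arbitrary integer perturbation can create a common factor of the minors, and the case $x_1=m-1$ leaves only $t=1$. If the perturbation argument does not close completely, my fallback is quantitative. Assuming $x_1\le m-1$, I would bound
\[
f(M)\le (m-1)(y_2-z_2)+m(z_1-y_1)+|z_2y_1-z_1y_2|,
\]
and combine this with the lower bound $f(M)\ge 4m^2-q(m)$ from the first part of Theorem~\ref{th1} and with Lemma~\ref{qq}. The aim is to show that all remaining entries are then forced to extremal values, such as $y_2-z_2=2m$ or close to it and $|y_1|,|z_1|$ near $m$. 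In that situation the gcd condition for $M^{(1)}$ can be verified by hand. This forced-structure analysis for the boundary case $x_1=m-1$ is where I expect most of the work to lie.
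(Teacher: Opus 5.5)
Your main route has a genuine gap, and it is the step you flag yourself: nothing in the proposal shows that some $M^{(t)}$ stays in $\mathcal{C}'$. Your observation that no prime obstructs two consecutive values of $t$ is correct. It does not, however, prevent different primes from obstructing $t=1$ and $t=2$, and when $x_1=m-1$ only $t=1$ is available. The extra move in $x_2$ does not rescue this either. For $m=3$, take $\begin{pmatrix} 2&-1&2\\ 3&2&-1\end{pmatrix}$, which lies in $\mathcal{C}'$ (minors $-3,-8,7$). Raising $x_1$ to $3$ gives minors $-3,-9,9$, with gcd $3$. Since $x_2=m$, it cannot be raised. Trading $x_2\to 2$ against $x_1\to 3$ leaves $f$ unchanged, because $y_2-z_2=z_1-y_1=3$. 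So perturbations of $x_1,x_2$ alone, used without the global lower bound, cannot prove the lemma.

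Your fallback is the right idea, and it is the paper's proof. You stopped, however, before the one computation that finishes it, and you misjudged what it yields. Drop the arithmetic conditions altogether and maximise $f$ over $\mathcal{M}^{\ast}_{2,3}(\llbracket -m,m\rrbracket^2)$ subject to $a_{1,1}\le m-1$. By monotonicity, $a_{1,1}=m-1$ and $a_{2,1}=m$. Suppose first that $a_{2,3}a_{1,2}\ge a_{1,3}a_{2,2}$. Then $f$ has coefficient $a_{2,3}-m<0$ in $a_{1,2}$, and, once $a_{1,2}=-m$, coefficient $-(2m-1)$ in $a_{2,3}$. Pushing both to $-m$ preserves the case condition and leaves
\[
3m^2-m+(m-1)a_{2,2}+a_{1,3}(m-a_{2,2})\le 4m^2-m-a_{2,2}\le 4m^2-m .
\]
The other case is symmetric: push $a_{1,3},a_{2,2}$ up to $m$ and obtain the same bound. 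Since $q(m)<m$ for $m\ge 3$ by Lemma~\ref{qq}, this gives $f(M)\le 4m^2-m<4m^2-q(m)\le f(M)$, a contradiction. So the entries are not merely ``forced towards extremal values'' with a residual gcd check on $M^{(1)}$ still to do. The relaxed maximum already lies strictly below the known lower bound, and the coprimality conditions defining $\mathcal{C}'$ play no role in this lemma at all.
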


\begin{proof}
Otherwise, $x_{1} \leq m-1$ and
\begin{eqnarray*}
 \DD^{(3)} (\llbracket -m,m \rrbracket ^2)    & =     & f (x_{1}, y_{1}, z_{1}, x_{2}, y_{2}, z_{2}) \\
   & =   & \max_{A \in {\mathcal C}',\ 
   a_{1,1} \leq m-1}  f (a_{1,1} ,a_{1,2} ,a_{1,3},a_{2,1} , a_{2,2} , a_{2,3})\\
   & \leq & \max_{A \in {\mathcal M}^{\ast}_{2,3}(\llbracket -m,m \rrbracket ^2),\  a_{1,1} \leq m-1}  f (a_{1,1} ,a_{1,2} ,a_{1,3},a_{2,1} , a_{2,2} , a_{2,3})\\
    & \leq & \max_{A \in {\mathcal M}^{\ast}_{2,3}(\llbracket -m,m \rrbracket ^2) ,\  a_{1,1} = m-1}  f (a_{1,1} ,a_{1,2} ,a_{1,3},a_{2,1} , a_{2,2} , a_{2,3})
\end{eqnarray*}
in view of the fact that the coefficient of $a_{1,1}$ in $f$ (see \eqref{defdef}) is $a_{2,2}  - a_{2,3} \geq 0$,
by our rule of signs on ${\mathcal M}^{\ast}_{2,3}(\llbracket -m,m \rrbracket ^2)$. 
Notice that there is no arithmetic constraint anymore in this maximum. 
Thus, whatever the value of $a_{2,2}  - a_{2,3}$  is, the maximum is attained for 
$$
a_{1,1}=m-1.
$$ 
Then, the coefficient of $a_{2,1}$  in the maximum is $a_{1,3}  - a_{1,2} \geq 0$ (see \eqref{defdef} and the sign pattern again) 
which is therefore attained on ${\mathcal M}^{\ast}_{2,3}(\llbracket -m,m \rrbracket ^2)$ for 
$$
a_{2,1}=m.
$$ 
Thus
$$
\DD^{(3)} (\llbracket -m,m \rrbracket ^2)   \leq  
\max_{ A \in {\mathcal M}^{\ast}_{2,3}(\llbracket -m,m \rrbracket ^2),\  a_{1,1} = m-1, a_{2,1}=m} ( a_{2,2} - a_{2,3}) (m-1) + (a_{1,3} - a_{1,2} ) m + |a_{2,3}a_{1,2}-a_{1,3} a_{2,2} |.
$$

If $a_{2,3}a_{1,2}-a_{1,3} a_{2,2} \geq 0$, then in the function we have to maximise
\begin{eqnarray*}
&&( a_{2,2} - a_{2,3}) (m-1) + (a_{1,3} - a_{1,2} ) m + |a_{2,3}a_{1,2}-a_{1,3} a_{2,2} | \\
&=& ( a_{2,2} - a_{2,3}) (m-1) + (a_{1,3} - a_{1,2} ) m + a_{2,3}a_{1,2}-a_{1,3} a_{2,2} \\
&=& (a_{2,3}  - m) a_{1,2} +( a_{2,2} - a_{2,3}) (m-1)+ a_{1,3} (m-a_{2,2})
\end{eqnarray*}
the coefficient of $a_{1,2}$ is $a_{2,3}  - m \leq -m < 0$. Therefore the maximum is attained for 
$$
a_{1,2}=-m,
$$ 
while, the coefficient of $a_{2,3}$ is $a_{1,2}  - (m-1) =- (2m-1) < 0$, so that the maximum is attained for 
$$
a_{2,3}=-m.
$$ 
Finally, it remains to maximise the quadratic function in $a_{2,2}$ and $a_{1,3}$:
\begin{eqnarray*}
&&(m-1)a_{2,2} +m^2 + ma_{1,3}  +m^2 -m +m^2  -a_{1,3} a_{2,2} \\
&=& 3m^2-m +(m-1)a_{2,2} +ma_{1,3}   -a_{1,3} a_{2,2} ,
\end{eqnarray*}
which is maximal for $a_{1,3} =m$ and  $a_{2,2}=0$. As a consequence, the upper bound we obtain for 
$\DD^{(3)} (\llbracket -m,m \rrbracket ^2) $ is 
$$
3 m^2 -m+m^2=4m^2 -m,
$$
a contradiction with the lower bound of Theorem \ref{th1} (for $m \geq 3$) using the inequality $q(m)<m$ of Lemma \ref{qq}.

In the case $a_{2,3}a_{1,2}-a_{1,3} a_{2,2} \leq 0$, we proceed in the same way. 

\end{proof}

\begin{lemma}
\label{lelemme222}
One has $x_{2}  =m$.
\end{lemma}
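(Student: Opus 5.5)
The plan is to repeat the argument of Lemma \ref{lelemme1}, now using the information $x_1=m$ that it provides. Suppose for contradiction that $x_2 \le m-1$. Then
$$
\DD^{(3)} (\llbracket -m,m \rrbracket ^2) = f(m,y_1,z_1,x_2,y_2,z_2) \le \max f,
$$
where the maximum runs over $A \in {\mathcal M}^{\ast}_{2,3}(\llbracket -m,m \rrbracket ^2)$ with $a_{1,1}=m$ and $a_{2,1}\le m-1$. Dropping the arithmetic condition $\Delta(A)=1$ only enlarges the set, so this inequality is legitimate. In \eqref{defdef} the coefficient of $a_{2,1}$ is $a_{1,3}-a_{1,2}\ge 0$ by the sign pattern. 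Hence the maximum is attained at $a_{2,1}=m-1$, and it remains to maximise
$$
g = m(a_{2,2}-a_{2,3}) + (m-1)(a_{1,3}-a_{1,2}) + |a_{2,3}a_{1,2}-a_{1,3}a_{2,2}|
$$
under the sign constraints $a_{1,2},a_{2,3}\in\llbracket -m,0\rrbracket$ and $a_{1,3},a_{2,2}\in\llbracket 0,m\rrbracket$. As in Lemma \ref{lelemme1}, I split according to the sign of $a_{2,3}a_{1,2}-a_{1,3}a_{2,2}$.

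\emph{Case $a_{2,3}a_{1,2}-a_{1,3}a_{2,2}\ge 0$.} Here $g$ is affine in $a_{1,2}$ with coefficient $a_{2,3}-(m-1)<0$, so the maximum is at $a_{1,2}=-m$. Then $g$ is affine in $a_{2,3}$ with coefficient $a_{1,2}-m=-2m<0$, so the maximum is at $a_{2,3}=-m$. What remains is
$$
3m^2-m + m\,a_{2,2} + (m-1)a_{1,3} - a_{1,3}a_{2,2},
$$
which is bilinear on $\llbracket 0,m\rrbracket^2$, so its maximum is at a corner. Checking the four corners, the maximum is $m^2$, attained at $a_{2,2}=m$, $a_{1,3}=0$. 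This gives $g\le 4m^2-m$.

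\emph{Case $a_{2,3}a_{1,2}-a_{1,3}a_{2,2}\le 0$.} Now the coefficient of $a_{1,3}$ is $(m-1)+a_{2,2}>0$, so the maximum is at $a_{1,3}=m$. Then the coefficient of $a_{2,2}$ is $m+a_{1,3}=2m>0$, so the maximum is at $a_{2,2}=m$. Setting $u=-a_{1,2}$ and $v=-a_{2,3}$, both in $\llbracket 0,m\rrbracket$, what remains is
$$
3m^2-m+mv+(m-1)u-uv.
$$
This is the same bilinear form as in the first case, so again $g\le 4m^2-m$.

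In both cases $\DD^{(3)}(\llbracket -m,m\rrbracket^2)\le 4m^2-m$. Since $q(m)<m$ for $m\ge 3$ by Lemma \ref{qq}, this contradicts the lower bound $4m^2-q(m)$ proved above, so $x_2=m$.

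The main care point is the order of the monotonicity steps. In each case one must choose a variable whose coefficient has a fixed sign over the whole admissible range before fixing it, as done above. After that, the corner evaluation of the bilinear remainder is routine.
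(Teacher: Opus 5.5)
Your proof is correct and follows the paper's argument. You fix $a_{2,1}=m-1$ by monotonicity, split on the sign of $a_{2,3}a_{1,2}-a_{1,3}a_{2,2}$, push the remaining linear variables to the boundary, and bound the bilinear remainder by $m^2$, which contradicts $4m^2-q(m)$ via $q(m)<m$. You also work out explicitly the second sign case, which the paper dismisses with ``we proceed in the same way'', and you correctly reduce it to the same bilinear form via $u=-a_{1,2}$, $v=-a_{2,3}$.
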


\begin{proof}
If our conclusion is false, then $x_{2}  \leq m-1$.
Since $x_{1} = m$ by Lemma \ref{lelemme1}, we have
\begin{eqnarray*}
\DD^{(3)} (\llbracket -m,m \rrbracket ^2)  & =     &   f (x_{1}, y_{1}, z_{1}, x_{2}, y_{2}, z_{2}) \\
							& = &  f (m , y_{1}, z_{1}, x_{2}, y_{2}, z_{2})\\
							   & =   & \max_{A \in {\mathcal C}',\ 
   \ a_{1,1} =m,\ a_{2,1}  \leq m-1}  f (m ,a_{1,2} ,a_{1,3},a_{2,1} , a_{2,2} , a_{2,3})\\
    & \leq & \max_{A \in {\mathcal M}^{\ast}_{2,3} (\llbracket -m,m \rrbracket ^2),\  a_{1,1} = m,\ a_{2,1}  =m-1}  f (m , a_{1,2} ,a_{1,3},m-1, a_{2,2} , a_{2,3}).
\end{eqnarray*}
Indeed, the coefficient of $a_{2,1}$  in the maximum \eqref{defdef} is $a_{1,3}  - a_{1,2} \geq 0$ 
which is therefore attained for a maximal $a_{2,1}$, that is
$$
a_{2,1}=m-1.
$$ 
Thus
$$
\DD^{(3)} (\llbracket -m,m \rrbracket ^2)  \leq  
\max_{ A \in {\mathcal M}^{\ast}_{2,3},\  a_{1,1} = m,\ a_{2,1}=m-1}  
m( a_{2,2} - a_{2,3}) + (m-1)(a_{1,3} - a_{1,2} ) + |a_{2,3}a_{1,2}-a_{1,3} a_{2,2} |.
$$

If $a_{2,3}a_{1,2}-a_{1,3} a_{2,2} \geq 0$, then in
$$
 m( a_{2,2} - a_{2,3})  +       (m-1) (a_{1,3}   - a_{1,2} ) 	   +      a_{2,3}a_{1,2}-a_{1,3} a_{2,2}
$$
the coefficient of $a_{1,2}$ is $a_{2,3}  - (m-1) \leq -(m-1) < 0$. Therefore the maximum is attained for 
$$
a_{1,2}=-m,
$$ 
while, the coefficient of $a_{2,3}$ is $a_{1,2}  - m= - 2m < 0$, so that the maximum is attained for 
$$
a_{2,3}=-m.
$$ 
Finally, it remains to maximise the quadratic function in $a_{2,2}$ and $a_{1,3}$:
\begin{eqnarray*}
&& ma_{2,2} +m^2 + (m-1) a_{1,3}  +m^2 -m +m^2  -a_{1,3} a_{2,2} \\
&=& 3m^2-m +m a_{2,2} +(m-1) a_{1,3}   -a_{1,3} a_{2,2} ,
\end{eqnarray*}
which is maximal for $a_{2,2}=m$  and $a_{1,3} =0$. Finally the upper bound we obtain is
$$
\DD^{(3)} (\llbracket -m,m \rrbracket ^2)  \leq 3 m^2 -m+m^2=4m^2 -m,
$$
a contradiction with the lower bound of Theorem \ref{th1} (for $m \geq 3$) using the inequality $q(m)<m$ of Lemma \ref{qq}.

In the case $a_{2,3}a_{1,2}-a_{1,3} a_{2,2} \leq 0$, we proceed in the same way. 

\end{proof}

\begin{lemma}
\label{lelemme3}
One has $y_{1}  =z_{2}=-m$.
\end{lemma}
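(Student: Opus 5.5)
We argue by contradiction, using the expression \eqref{defdef} of $f$ together with the two normalisations already established. By Lemmas \ref{lelemme1} and \ref{lelemme222}, $x_1=x_2=m$, so
$$
f = m(y_2-z_2)+m(z_1-y_1)+|z_2y_1-z_1y_2| .
$$
We reparametrise the four remaining entries by non-negative integers measuring their distance from the extreme values allowed by the sign pattern:
$$
y_1=-m+s,\quad z_2=-m+t,\quad y_2=m-u,\quad z_1=m-v,\qquad s,t,u,v\in\llbracket 0,m\rrbracket .
$$
We must show $s=t=0$. By the symmetry $s\leftrightarrow t$, $u\leftrightarrow v$ of the problem we assume $s\ge1$ and distinguish two cases according to the sign of $\det M_1=y_1z_2-z_1y_2$.

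\emph{Case $\det M_1\ge 0$.} Here the relaxation argument of Lemmas \ref{lelemme1} and \ref{lelemme222} works as it stands. We maximise over $\mathcal M^{\ast}_{2,3}(\llbracket -m,m\rrbracket^2)$ with $y_1\ge -m+1$. The coefficient of $y_1$ is $z_2-m<0$, so we take $y_1=-m+1$. The coefficient of $z_2$ is then $y_1-m<0$, so we take $z_2=-m$. What remains is
$$
3m^2-2m+my_2+mz_1-z_1y_2 ,
$$
which is bilinear on $[0,m]^2$ and has maximum $m^2$, attained at a corner. This gives $f\le 4m^2-2m<4m^2-q(m)$ by Lemma \ref{qq}, a contradiction.

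\emph{Case $\det M_1\le 0$.} Here the relaxation is too weak: it yields values up to $4m^2-1$, so the arithmetic condition $\Delta(M)=1$ must be used. A direct expansion gives the exact identity
$$
f = 4m^2-2m(u+v)+uv-st .
$$
Since $u,v\le m$, we have $2m(u+v)-uv\ge m(u+v)$. So the inequality $f\ge 4m^2-q(m)$, combined with $q(m)<m$, forces $u=v=0$ and $st\le q(m)-1$. The three determinants are then
$$
\det M_3=m(2m-s),\qquad \det M_2=-m(2m-t),\qquad \det M_1=-\bigl(m(s+t)-st\bigr).
$$
Three subcases arise:
\begin{itemize}
\item If $t=0$, then $m$ divides all three determinants, so $\Delta(M)\ge m>1$.
\item If $st\ge 2$, take a prime $p\mid s$. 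Since $p\le st<q(m)$, the definition of $q(m)$ gives $p\mid m$. Then $p$ divides all three determinants, so $\Delta(M)>1$.
\item If $st=1$, then $s=t=1$, and all three determinants are divisible by $2m-1>1$.
\end{itemize}
In every subcase $\Delta(M)\ne 1$, contradicting $M\in\mathcal C'$. Hence $s=0$, and symmetrically $t=0$, which gives $y_1=z_2=-m$.

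The main obstacle is the second case. There the purely analytic maximisation used in the previous lemmas does not suffice, and the contradiction must come from the coprimality condition together with the minimality of the prime $q(m)$. The key step is to obtain the clean identity for $f$ and the explicit form of the determinants; after that, the divisibility argument is short.
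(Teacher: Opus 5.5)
Your Case~1 is correct and is essentially the paper's relaxation argument. You are also right that the relaxation cannot handle $\det M_1\le 0$; in fact it reaches $4m^2$, not $4m^2-1$ (take $s=1$, $t=u=v=0$, where the minors are $-m,-2m^2,2m^2-m$). This is exactly where the paper's ``we proceed in the same way'' is too quick.

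Your arithmetic repair, however, contains an off-by-one error. From $f\le 4m^2-m(u+v)-st$ and $f\ge 4m^2-q(m)$ you get $u=v=0$, but only $st\le q(m)$, not $st\le q(m)-1$. The configuration $u=v=0$, $\{s,t\}=\{1,q(m)\}$ then escapes all three of your subcases, because the only prime dividing $st$ is $q(m)$, which does not divide $m$. No arithmetic argument can exclude it. For $s=1$, $t=q(m)$ the minors are
\[
\det M_1=-(q(m)(m-1)+m),\qquad \det M_2=-m(2m-q(m)),\qquad \det M_3=m(2m-1).
\]
Their gcd is $1$ by the same coprimality computation used for $S(m)$, and their absolute values add up to $4m^2-q(m)$. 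For $m=3$ this gives
\[
M=\begin{pmatrix}3&-2&3\\3&3&-1\end{pmatrix},\qquad (\det M_1,\det M_2,\det M_3)=(-7,-12,15).
\]
This is an element of $\mathcal C'$ with $f(M)=34=4m^2-q(m)$ and $y_1\neq -m$. Nothing in your argument, which uses only $M\in\mathcal C'$ and $f(M)\ge 4m^2-q(m)$, can rule it out. Indeed, if $M$ is read as an arbitrary maximiser of $f$ on $\mathcal C'$ as literally defined, the lemma is false. There is also a smaller slip: ``take a prime $p\mid s$'' fails when $s=1$; you want $p\mid st$.

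The missing idea is positivity rather than arithmetic. $M$ has to be the matrix $B$ produced by Lemma~\ref{lelele6} from an extremal sequence. Column permutations and symmetries of the square preserve the existence of a kernel vector with positive entries, namely the multiplicities. So the kernel of $M$, which is spanned by $(\det M_1,-\det M_2,\det M_3)$, contains a positive vector, and hence this spanning vector has entries of one sign. With $x_1=x_2=m$ and the sign pattern, you have $\det M_3=m(y_2-y_1)>0$ and $-\det M_2=m(z_1-z_2)>0$. This forces $\det M_1>0$. So your Case~2, like the paper's second case, is empty, and Case~1 alone proves the lemma. This is exactly what excludes the example above: its kernel vector $(-7,12,15)$ has mixed signs, and negating its first column yields an image of $S(3)$ under $(x,y)\mapsto(-y,-x)$.
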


\begin{proof}
If our conclusion is false, then $y_{1}+z_{2}  \geq -(2m-1)$ and we have by Lemmas \ref{lelemme1} and \ref{lelemme222} 
\begin{eqnarray*}
\DD^{(3)} (\llbracket -m,m \rrbracket ^2) & =     &   f (x_{1}, y_{1}, z_{1}, x_{2}, y_{2}, z_{2}) \\
							& =     &   f (m , y_{1}, z_{1}, m, y_{2}, z_{2})\\
				   & =   & \max_{A \in {\mathcal C}',\  a_{1,1} = a_{2,1} =m}  f (m ,a_{1,2} ,a_{1,3}, m , a_{2,2} , a_{2,3})\\
  				 & \leq & \max_{A \in {\mathcal M}^{\ast}_{2,3} (\llbracket -m,m \rrbracket ^2),\  a_{1,1} = a_{2,1}  =m,\ a_{1,2}+a_{2,3}  \geq -(2m-1)}  
				 								f (m ,a_{1,2} ,a_{1,3},m , a_{2,2} , a_{2,3}).
\end{eqnarray*}

If $a_{2,3}a_{1,2}-a_{1,3} a_{2,2} \geq 0$, then
\begin{eqnarray*}
& &  f (m ,a_{1,2} ,a_{1,3},m , a_{2,2} , a_{2,3})\\
&=& m( a_{2,2} - a_{2,3})  +      m (a_{1,3}   - a_{1,2} )  	   +      a_{2,3}a_{1,2}-a_{1,3} a_{2,2} \\
&=& m( a_{2,2} + a_{1,3} - (a_{1,2}+a_{2,3} ))+ a_{2,3}a_{1,2}-a_{1,3} a_{2,2}.
\end{eqnarray*}
In this function, the coefficients of $a_{1,3}$ and $a_{2,2}$ are respectively $m-a_{2,2}$ and $m-a_{1,3}$ which are both non-negative.
Therefore the maximum is attained for 
$$
a_{1,3}=a_{2,2}=m.
$$ 
It follows, using $a_{1,2}+a_{2,3} \geq -(2m-1)$, that
\begin{eqnarray*}
 f (m ,a_{1,2} ,a_{1,3},m , a_{2,2} , a_{2,3})& \leq  &  m( 2m - (a_{1,2}+a_{2,3} ))+ a_{2,3}a_{1,2}-m^2\\
   & \leq  & m(4m-1)-m^2 + a_{2,3}a_{1,2}\\
   &\leq & 3m^2 - m+m^2 = 4m^2 -m,
 \end{eqnarray*}
a contradiction with the lower bound of Theorem \ref{th1} (for $m \geq 3$) using the inequality $q(m)<m$ of Lemma \ref{qq}.

In the case $a_{2,3}a_{1,2}-a_{1,3} a_{2,2} \leq 0$, we proceed in the same way. 
\end{proof}

Finally, using the previous three lemmas, and going back to Lemma \ref{lelele6} and the definition \eqref{defofM} of $M$, the matrix 
attaining the maximum we are computing,  we obtain
\begin{equation}
\label{eqfinale}
\DD^{(3)} (\llbracket -m,m \rrbracket ^2)  =   m( y_{2} + z_{1} + 2m ) + | m^2-z_{1} y_{2} | =m( y_{2} + z_{1} + 2m ) + (m^2-z_{1} y_{2} ),
\end{equation}
on recalling that the to unknown coefficients $y_{2}$ and $z_{1}$ satisfy together with the other coefficients 
of the matrix $M$ the arithmetic conditions included in the definition of ${\mathcal C}'$.
We may indeed rewrite the arithmetic condition $\Delta (M)=1$ which appears in the definition of $ {\mathcal C}' $ as
$$
\gcd ( m^2 + my_{2}, m^2 + mz_{1}, m^2 - y_{2}z_{1} ) = 1
$$
which implies that
$$
\gcd (m, z_{1} ) =  \gcd (m, y_{2} ) =1\quad \text{ and }\quad  z_{1}  \neq y_{2}  .
$$
Indeed, if an integer $d$ divides for instance $m$ and $y_{2}$ then, 
it divides $\gcd ( m^2 + my_{2}, m^2 + mz_{1}, m^2 - y_{2}z_{1} )=1$ thus $d=1$. 
Moreover, if $z_{1}$ could be equal to $y_{2}$, say $a$, 
then, $m+a \geq m >1$ would divide all three terms in the above gcd, a contradiction.  
Thus $z_{1}$ and $y_{2}$ must be two distinct integers coprime to $m$. 

But the coefficient of $y_{2} $ in the right-hand side of  \eqref{eqfinale} is $m-z_{1} \geq 0$ and the one of 
$z_{1} $ is $m-y_{2} \geq 0$, thus to maximise this polynomial we have to have as big as possible values for these 
two variables. It turns out that the biggest integer coprime to $m$ and less than $m$ is $m-1$ and that the biggest 
integer coprime to $m$ and less than $m-1$ is $m-q(m)$. Finally we obtain replacing $z_{1}$ by $m-1$ and $y_2$ by 
$m-q(m)$ or conversely (the function is symmetric in these two variables) :
\begin{eqnarray*}
\DD^{(3)} (\llbracket -m,m \rrbracket ^2)  & \leq & 3m^2 + m \big( m-1+m-q(m) \big)  - (m-1)(m-q(m)) \\
								& = & 4m^2 -q(m),
\end{eqnarray*}
which concludes the proof of the upper bound in Theorem \ref{th1}.

The method used above is indeed constructive and gives directly the `moreover' statement. 
Notice that, at the end of the proof, we may have exchange the values given to $y_{2}$ and $z_{1}$, 
but the two corresponding matrices are analogous in the sense we defined above. 
\smallskip

A final remark: proving that $\DD^{(3)} (\llbracket -m,m \rrbracket ^2)  \leq 4 m^2$ 
is a significantly easier task. Indeed, obtaining such a weaker upper bound does not require to take the arithmetic constraints into consideration.

\section{Three-dimensional boxes}
\label{d=3}

We start by proving Theorem \ref{thmbox3}.

\begin{proof}[Proof of Theorem \ref{thmbox3}]

Let $m \ge 2$ be an integer. We consider the cases $m$ odd and $m$ even separately.

Assume first that $m$ is odd and define the sequence $T(m)$
consisting of
$\begin{pmatrix}
- m \\
- m \\
- m \\
\end{pmatrix}
$
repeated $4m^3-8m^2+5m-2$ times, 
$\begin{pmatrix}
- m \\
m -1\\
m \\
\end{pmatrix}
$
repeated $4m^3-2m^2+2m$ times, 
$\begin{pmatrix}
m\\
-m \\
m-2\\ 
\end{pmatrix}$  
repeated $4m^3-2m^2+m$ times and finally
$
\begin{pmatrix}
m-1 \\
m\\
-m 
\end{pmatrix} 
$
repeated $4m^3-4m^2+2m$ times. We may compute that
\begin{eqnarray*}
\|T(m)\| & = & (4m^3-8m^2+5m-2)+(4m^3-2m^2+2m)+(4m^3-2m^2+m)+(4m^3-4m^2+2m) \\
	&=& 16 m^3 -16m^2 + 10m -2.
\end{eqnarray*}

It is a zero-sum sequence since
\begin{eqnarray*}
 &\hspace{-6.5cm}(4m^3-8m^2+5m-2)
\begin{pmatrix}
- m \\
- m \\
- m \\
\end{pmatrix}
+
(4m^3-2m^2+2m)
\begin{pmatrix}
- m \\
m -1\\
m \\
\end{pmatrix}
& \\
& 
\hspace{5cm}+(4m^3-2m^2+m)
\begin{pmatrix}
m\\
-m \\
m-2\\ 
\end{pmatrix} 
+ (4m^3-4m^2+2m)
\begin{pmatrix}
m-1 \\
m\\
-m 
\end{pmatrix} 
=
\begin{pmatrix}
0 \\
0\\
0 
\end{pmatrix}. &
\end{eqnarray*}

To show that it is a minimal zero-sum sequence, we solve 
$$
k_1
\begin{pmatrix}
- m \\
- m \\
- m \\
\end{pmatrix}
+
k_2
\begin{pmatrix}
- m \\
m -1\\
m \\
\end{pmatrix} 
+k_3
\begin{pmatrix}
m\\
-m \\
m-2\\ 
\end{pmatrix} 
+ k_4
\begin{pmatrix}
m-1 \\
m\\
-m 
\end{pmatrix} 
=
\begin{pmatrix}
0 \\
0\\
0 
\end{pmatrix}
$$
for non-negative integers $k_1,k_2,k_3,k_4$ that are not all zero and satisfy
\begin{eqnarray}
\nonumber
k_1 \leq 4m^3-8m^2+5m-2, && k_2 \leq 4m^3-2m^2+2m,\\
\label{xis}
k_3 \leq 4m^3-2m^2+m, &\quad {\rm and } \quad& k_4 \leq  4m^3-4m^2+2m.
\end{eqnarray}

This implies
$$
k_4 (m-1) = (k_1+k_2-k_3)m,\ k_2 (m-1) = (k_1+k_3-k_4)m,\ k_3 (m-2) = (k_1-k_2+k_4)m.
$$
Therefore, there are three integers $l_1,l_2$ and $l_3$ such that 
\begin{eqnarray}
k_4 & = & l_1\ m \label{eq1} \\
k_1+k_2-k_3 & = & l_1\ (m-1) \label{eq2} \\
k_2 & = & l_2\ m \label{eq3} \\
k_1+k_3-k_4 & = & l_2\ (m-1) \label{eq4} \\
k_3 &=& l_3\ m \label{eq5} \\
k_1-k_2+k_4 & = & l_3\ (m-2) \label{eq6}
\end{eqnarray}
Summing \eqref{eq2} and \eqref{eq4} gives $2k_1 +k_2 -k_4 = (l_1+l_2) (m-1)$ which, with \eqref{eq1} and \eqref{eq3}, yields
\begin{equation}
\label{eqx1}
k_1 = \frac{(l_1+l_2) (m-1) + (l_1-l_2)m}2.
\end{equation}
In an analogous way, one gets $2k_3 -k_2 -k_4 = (l_2-l_1) (m-1)$ and then
\begin{equation}
\label{eqx3}
k_3 = \frac{(l_1+l_2) m + (l_2-l_1)(m-1)}2.
\end{equation}
Replacing in \eqref{eq5}, we obtain
\begin{equation}
\label{eqA}
(l_1+l_2-2l_3) m = (l_1-l_2)(m-1).
\end{equation}
With \eqref{eq6}, we get $(l_1+l_2) (m-1) + 3(l_1-l_2)m =2l_3 (m-2)$ and after replacing $2(m-1)$ by $m+(m-2)$, 
\begin{equation}
\label{eqB}
(7 l_1 -5l_2) m = (4l_3 -l_1-l_2)(m-2).
\end{equation}

Equation \eqref{eqA} gives, using the coprimality condition $\gcd (m, m-1)=1$, the existence of an integer $q$ such that 
$$
l_1 -l_2 =qm,\quad {\rm and}\quad l_1+l_2-2l_3=q(m-1),
$$
and \eqref{eqB} gives the existence of an integer $r$ such that 
$$
7 l_1 -5l_2 =r(m-2) ,\quad {\rm and}\quad 4l_3 -l_1-l_2 =rm.
$$
With the two first equalities and the fourth one, one solves in $(l_1,l_2,l_3)$, namely: 
$$
l_1 = \frac{(3qm-2q+rm)}{2}, \quad l_2 = \frac{(qm-2q+rm)}{2}, \quad {\rm and}\quad l_3 = \frac{(qm-q+rm)}{2}
$$
and finally, injecting these values in the third equality above, one finds: 
$$
r=(1-4m)q.
$$
Finally we obtain
\begin{eqnarray}
\nonumber
l_1 & = & -( 2m^2-2m+1)q, \\
\nonumber
l_2 & = & -( 2m^2-m+1)q,  \\
\nonumber
l_3 & = & -(2m^2-m+\frac12 )q. 
\end{eqnarray}
This final equality implies that $q$ must be non-positive and even, say $q=-2q'$ and we finally obtain
\begin{eqnarray}
\nonumber
k_2 & = & (4m^3 -2m^2+2m) q',\\
\nonumber
k_3 & = &   (4m^3 -2m^2+m) q', \\
\nonumber
k_4 & = & (4m^3 -4m^2+2m) q'.
\end{eqnarray}
Computing $k_1$, for instance with \eqref{eqx1}, finally gives $k_1 =  (4m^3 -8m^2+5m-2) q'$.
In view of the constraints \eqref{xis}, this gives $q'=1$ and finally the minimality of $S$.

\medskip

In the case of even $m$, we define the sequence $T(m)$ consisting of
$\begin{pmatrix}
- m \\
- m \\
- m \\
\end{pmatrix}
$
repeated $4m^3-6m^2+4m-1$ times, 
$\begin{pmatrix}
- m \\
m -1\\
m \\
\end{pmatrix}
$
repeated $4m^3-4m^2+2m$ times, 
$\begin{pmatrix}
m\\
-(m-1) \\
m-1\\ 
\end{pmatrix}$   
repeated $4m^3-2m^2+m$ times and finally  
$
\begin{pmatrix}
m-1 \\
m\\
-m 
\end{pmatrix} 
$
repeated $4m^3-4m^2+m$  times. 
We may compute that
\begin{eqnarray*}
\|T(m)\| & = & (4m^3-6m^2+4m-1)+(4m^3-4m^2+2m)+(4m^3-2m^2+m)+(4m^3-4m^2+m) \\
	&=& 16 m^3 -16 m^2 +8m-1.
\end{eqnarray*}

The proof that this is a minimal zero-sum sequence is analogous to the one of the odd case and left to the reader. 

\end{proof}

Finally, we prove Theorem \ref{thmbox3bis}. 

\begin{proof}[Proof of Theorem \ref{thmbox3bis}] Using the notation introduced in Section \ref{newresults}, we observe that
$$
\llbracket -m,m \rrbracket^3  \subseteq \{ (u_1,u_2, u_3) \in \ZZ^3 : u_1^2+ u_2^2 +u_3^2 \leq 3m^2 \} \subseteq \Bcal^{(3)}_{\lceil \sqrt{ 3} m \rceil}.
$$
It now follows from Theorem 5 in \cite{GP26} that
$$\DD^{(4)}  ( \llbracket -m,m \rrbracket ^3) \leq \DD^{(4)} ( \Bcal^{(3)}_{ \lceil\sqrt{ 3} m \rceil}) \sim  \frac{16}{3 \sqrt 3}  \lceil\sqrt{ 3} m \rceil^3 \sim 16m^3.$$
The fact that $\DD^{(4)} ( \llbracket -m,m \rrbracket ^3) \gtrsim 16m^3$, and thus the desired result, follows from Theorem \ref{thmbox3}.

\end{proof}

\end{document}